\theoremstyle{plain}
\newtheorem{theorem}{Theorem}[section]
\newtheorem{assumption}[theorem]{Assumption}
\newtheorem{lemma}[theorem]{Lemma}
\newtheorem{corollary}[theorem]{Corollary}
\newtheorem{proposition}[theorem]{Proposition}
\newtheorem{remark}[theorem]{Remark}
\numberwithin{equation}{section}
\def\section{\@startsection{section}{1}%
  \z@{1.5\linespacing\@plus\linespacing}{.5\linespacing}%
  {\normalfont\bfseries\large\centering}}
\newcommand{\Real}{{\mathbb{R}}}
\newcommand{\beq}{\begin{equation}}
\newcommand{\eeq}{\end{equation}}
\newcommand{\ham}{\mathcal{H}}
\newcommand{\beqa}{\begin{eqnarray}}
\newcommand{\eeqa}{\end{eqnarray}}
\numberwithin{equation}{section}
\def\ds{\displaystyle}
\def\ni{\noindent}
\def\bs{\bigskip}
\def\ms{\medskip}
\def\fref#1{{\rm (\ref{#1})}}
\def\pa{\partial}
\def\RR{\mathbb{R}}
\def\TT{\mathbb{T}}
\def\calJ{\mathcal J}
\def\calH{\mathcal H}
\begin{document}
\title{Nonlinear stability criteria for the HMF Model}
\author[M. Lemou]{Mohammed Lemou}
\address{CNRS, IRMAR,  Universit\'e de Rennes 1 and INRIA, IPSO Project}
\email{mohammed.lemou@univ-rennes1.fr}
\author[A. M. Luz]{Ana Maria Luz}
\address{IRMAR, Universit\'e de Rennes 1 and IME, Universidade Federal Fluminense}
\email{anamaria.luz@gmail.com}
\author[F. M\'ehats]{Florian M\'ehats}
\address{IRMAR, Universit\'e de Rennes 1 and INRIA, IPSO Project}
\email{florian.mehats@univ-rennes1.fr}
\begin{abstract}
We study the nonlinear stability of a large class of inhomogeneous steady state solutions to the Hamiltonian Mean Field (HMF) model. Under a simple criterion, we prove the nonlinear stability of steady states which are decreasing functions of the microscopic energy. To achieve this task, we extend to this context the strategy based on generalized rearrangement techniques which was developed recently for the gravitational Vlasov-Poisson equation. Explicit stability inequalities are established and our analysis is able to treat non compactly supported steady states to HMF, which are physically relevant in this context but induces additional difficulties, compared to the Vlasov-Poisson system.
\end{abstract}
\date{} 
\thanks{The authors acknowledge support by the ANR project Moonrise ANR-14-CE23-0007-01.  The work of A. M. Luz was supported by the Brazilian National Council for
Scientific and Technological Development (CNPq) under the program "Science without Borders" 249279/2013-4.}
\maketitle

\section{Introduction and main result}
\subsection{The HMF model}
In this paper, we are interested in the nonlinear stability of a class of inhomogeneous steady state solutions to the Hamiltonian mean-field (HMF) model \cite{messer-spohn,antoni-ruffo}. The HMF system is a kinetic model describing particles moving on a unit circle interacting via an infinite range attractive cosine potential. This model has been used as a toy-model of the Vlasov-Poisson system in the physical community, for the study of non equilibrium phase transitions \cite{chavanis1,chavanis2,antoniazzi,ogawa2}, of travelling clusters \cite{yamaguchi1,yamaguchi2} or of relaxation processes \cite{barre1,barre2,chavanis4}. The dynamics of perturbations of inhomogeneous steady states of the HMF model has been investigated in \cite{barre3,barre4} and the formal linear stability of steady states has been studied in \cite{chavanis3,OGW,yamaguchi3}. In particular, a simple criterion of linear stability has been derived in \cite{OGW}. Our aim here is to prove the nonlinear stability of inhomogeneous steady states under the same criterion, by adapting the techniques developed in \cite{ML,ML2} for the 3D Vlasov-Poisson system. 
The long-time validity of the N-particle approximation for the HMF model has been investigated in \cite{rousset1,rousset2} and the Landau-damping phenomenon near spatially homogeneous state has been studied recently in \cite{faou-rousset}.

In the HMF model, the distribution function of particles $f(t,\theta,v)$ solves the initial-valued problem
\begin{align}\label{sis1}
&\partial_t f+v\partial_{\theta} f-\partial_{\theta} \phi_f\partial_v f=0,\qquad(t,\theta,v)\in\Real_{+}\times\TT\times\Real,\\
&f(0,\theta,v)=f_{\rm init}(\theta,v)\geq 0,\nonumber
\end{align}
where $\TT$ is the flat torus $[0,2\pi]$ and where the self-consistent potential $\phi_f$ associated to a distribution function $f$ is defined by
\begin{equation}
\label{phi}
 \phi_f(\theta)=-\int^{2 \pi}_{0}\rho_f(\theta')\cos(\theta-\theta')d\theta', \qquad \rho_f(\theta)=\int_{\Real}f(\theta,v)dv.
\end{equation}
The so-called magnetization is the two-dimensional vector defined by
\begin{equation}
M_f=\int^{2 \pi}_{0}\rho_f(\theta)u(\theta) d\theta, \qquad \mbox{with}\quad u(\theta)=(\cos \theta,\sin \theta)^T
\end{equation}
and we have
\begin{equation}
 \phi_f(\theta)=-M_f\cdot u(\theta).
\end{equation}
The following quantities are invariant during the evolution:
\begin{itemize}
\item[--] the Casimir functions
\begin{equation}
\iint\ G( f(\theta,v))d\theta dv
\end{equation}
for any function $G\in\mathcal C^1(\Real_+)$ such that $G(0)=0$;
\item[--] the nonlinear energy
\begin{eqnarray}\label{ham}
\mathcal{H}(f)&=&\frac{1}{2}\iint\ v^2 f(\theta,v)d\theta dv +\frac{1}{2}\int \rho_f(\theta)\phi_f(\theta)d\theta \nonumber \\
&=&\frac{1}{2}\iint\ v^2 f(\theta,v) d\theta dv -\frac{1}{2}M_f\cdot\int \rho_f(\theta)u(\theta)d\theta \nonumber\\
&=&\frac{1}{2}\iint\ v^2 f(\theta,v) d\theta dv -\frac{1}{2}|M_f|^2;
\end{eqnarray}
\item[--] the total momentum
\begin{equation}
\iint\ vf(\theta,v)d\theta dv.
\end{equation}
\end{itemize}
Moreover, the HMF system enjoys the Galilean invariance, that is, if $f(t,\theta,v)$ is a solution, then so is $f(t,\theta+v_0 t,v+v_0)$, for $v_0\in \Real$.

\subsection{Statement of the main result}
We consider a stationary state of the form
\begin{equation}\label{Qexp}
f_0(\theta,v)=F(e_0(\theta,v)),\qquad \mbox{with}\quad e_0(\theta,v)=\frac{v^2}{2}+\phi_0(\theta),
\end{equation}
and where the potential associated to $f_0$ according to \eqref{phi} takes the form
$$\phi_0(\theta)=-m_0\cos \theta,\qquad \mbox{with }m_0> 0.$$
Here $F$ is a given function satisfying the following assumption.
\begin{assumption}
\label{hypF}
The function $F$ is a $\mathcal C^0$ function on $\RR$ satisfying the following properties. It is a $\mathcal C^1$ function on $(-\infty,e_*)$, for some $e_* \in \RR\cup \{+\infty\}$, with $F'<0$ on this interval. We also assume that $F(e)=0$ for $e\geq e_*$ when $e_*$ is finite, and that $\lim_{e\to +\infty}F(e)=0$ if $e_*=+\infty$. We denote by $F^{-1}$ its inverse function, which is a $\mathcal C^1$ function defined from $(0,\sup F)$ onto $(-\infty,e^*)$. The function $f_0$ given by \eqref{Qexp} is supposed to belong to the energy space $L^1((1+|v|^2)d\theta dv)$. Moreover, in the case $e_*<+\infty$ and $m_0=e_*$, we assume further that $\int_{-m_0}^{m_0}\log(m_0-e)F'(e)de<+\infty$.\\
\end{assumption}
\ni
{\bf Examples.} All the following typical examples that can be found in the literature fulfill our Assumption \ref{hypF}:
\begin{itemize}
 \item[(i)] Maxwell-Boltzmann distributions \cite{chavanis4}, $F(e)=A\exp(-\beta e)$.
 \item[(ii)] Polytropic distributions with compact support \cite{chavanis3}, $F(e)=A(e_*-e)_+^{\frac{1}{q-1}}$ with $q>1$.
 \item[(iii)] Polytropic distributions with non compact support \cite{chavanis3}, $F(e)=A(e_0+e)_+^{\frac{1}{q-1}}$ with $\frac{1}{3}<q<1$.
\item[(iv)] Lynden-Bell distributions \cite{chavanis1}, $F(e)=\frac{A}{1+B\exp(\beta e)}$.
\end{itemize}
\begin{remark}
Note that Assumption \ref{hypF} implies in particular that $f_0\in L^{\infty}$ since
$$\|f_0\|_{L^{\infty}}\leq F(-m_0).$$ It is also clear that $e_*$ is finite if and only if $f_0$ is compactly supported. We finally note that we  must have $e_*>-m_0$, otherwise $f_0=0$ and this contradicts the assumption $m_0>0$.
\end{remark}
\bs

Our aim is to prove the orbital stability of such steady state under the following criterion.
\begin{assumption}[Nonlinear stability criterion] We will assume that $f_0$ satisfies the following criterion
\label{criterion}
$$\kappa_0<1,$$
with
\begin{equation}
\label{kappa0}
\kappa_0=\int^{2 \pi}_{0}\!\!\int^{+\infty}_{-\infty} \left|F'\!\left(e_0(\theta,v)\right)\right|\left(\frac{\ds \int_{\mathcal D}(\cos \theta-\cos \theta')(e_0(\theta,v)-\phi_0(\theta'))^{-1/2}d\theta'}{\ds\int_{\mathcal D}(e_0(\theta,v)-\phi_0(\theta'))^{-1/2}d\theta'} \right)^2d\theta dv,
\end{equation}
where $${\mathcal D}=\left\{\theta'\in \TT\,:\,\,\phi_0(\theta')<e_0(\theta,v)\right\}.$$
\end{assumption}
\begin{remark}
Direct computations show that our criterion $\kappa_0<1$ is the same as the one derived in \cite{OGW}, that is
\begin{align*}
0<1&+\iint F'(e_0(\theta,v))\cos^2 \theta d\theta dv-\frac{4}{\sqrt{m_0}}\int_{-m_0}^{m_0}K(k(e))\left(\frac{2E(k(e))}{K(k(e))}-1\right)^2F'(e)de\\
&-\frac{4}{\sqrt{m_0}}\int_{m_0}^{+\infty}\frac{K(1/k(e))}{k(e)}\left(\frac{2k(e)^2E(1/k(e))}{K(1/k(e))}+1-2k(e)^2\right)^2F'(e)de,
\end{align*}
with $k(e)=\left(\frac{e+m_0}{2m_0}\right)^{1/2}$ and where $K(k)$ and $E(k)$ are respectively the complete elliptic integrals of first and second kinds, see e.g. \cite{barre3}.
\end{remark}
Before stating our main result, we first recall the usual notion of rearrangement which we adapt here to functions defined on the domain $\TT\times\Real$. For any nonnegative function $f\in L^1(\TT\times \Real)$, we define its distribution function as
\begin{equation}
\label{muf}
\mu_f(s)=\left\thickvert\left\{(\theta,v) \in \TT\times \Real: f(\theta,v)>s\right\}\right\thickvert,\quad \mbox{for all }s\geq 0,
\end{equation}
where $|A|$ denotes the Lebesgue measure of a set $A$. Note that $\mu_f(0)$ may be infinite, but $\mu_f(s)$ is finite for $s>0$. Let $f^\sharp$ be the pseudo-inverse of the function $\mu_f$, defined by
$$f^\sharp(s)=\inf\left\{t\geq 0,\,\mu_f(t)\leq s\right\}=\sup\left\{t\geq 0,\,\mu_f(t)> s\right\},\quad \mbox{for all }s\geq 0$$
with, in particular, $f^\sharp(0)=\|f\|_{L^\infty}\in \Real \cup\{+\infty\}$ and $f^\sharp(+\infty)=0$.  
It is well known that  $\mu_f$ is right-continuous and that for or all $s\geq 0$, $t\geq 0$,
\begin{equation}
\label{equivalence}
f^\sharp(s)>t \quad\Longleftrightarrow\quad  \mu_f(t)>s.
\end{equation}Next, we define the rearrangement $f^*$ of $f$ by
$$f^*(\theta,v)=f^\sharp\left(\left\thickvert B(0,\sqrt{\theta^2+v^2})\cap\TT\times \RR\right\thickvert\right),$$
where $B(0,R)$ denotes the open ball in $\Real^2$ centered at 0 with radius $R$.

Our main result is the following theorem.
\begin{theorem}
\label{mainthm}
Let $f_0$ be  a steady state of the form \fref{Qexp} satisfying Assumptions \ref{hypF} and \ref{criterion}. There exists $\delta >0$ such that, for all $f\in L^1\left((1+|v|^2)d\theta dv\right)$ satisfying $|M_f- M_{f_0(\cdot-\theta_f)}| < \delta$, we have

 \begin{align}
\label{stab-inequ}
 \|f- f_0(\cdot-\theta_f)\|_{L^1}^2 &\leq  C\left({\textcolor{white}\int}\hspace{-0.3cm} \ham(f)-\ham(f_0)+ C (1+\|f\|_{L^1}) \|f^*-f_0^*\|_{L^1} \right. \nonumber \\ &
\left. +C \int_0^{+\infty}s^2\left(f_0^\sharp(s)- f^\sharp(s)\right)_+ ds + C 
\int_0^{+\infty}\mu_{f_0}(s)^2\beta_{f^*,f_0^*}(s) ds\right) ,
 \end{align}
 where  $\beta_{f^*,f_0^*}(s) =\left\thickvert\left\{(\theta,v) \in \TT\times \Real: f^*(\theta,v)\leq s<f_0^*(\theta, v)\right\}\right\thickvert, \mbox{for all}\  s\geq 0,$  and where $C$  is a positive constant depending only on $f_0$. The parameter $\theta_f$ is  defined by
 $M_f= |M_f| (\cos\theta_f, \sin \theta_f)^T.$ 
 In particular, if $f_0$ is a compactly supported steady state, then \fref{stab-inequ} reduces to 
  \begin{equation}
\label{stab-inequ-compact}
 \|f- f_0(\cdot-\theta_f)\|_{L^1}^2\leq  C\left({\textcolor{white}\int}\hspace{-0.3cm} \ham(f)-\ham(f_0)+ C (1+\|f\|_{L^1}) \|f^*-f_0^*\|_{L^1}\right).
\end{equation}
\end{theorem}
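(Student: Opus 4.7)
The plan is to adapt the generalized rearrangement strategy developed in \cite{ML,ML2} for 3D Vlasov-Poisson, exploiting the crucial simplification that the HMF mean-field potential is parametrized by the two-dimensional magnetization vector $M_f$. By the $\theta$-translation invariance of the HMF equation, we may assume $\theta_f=0$, so that $M_f$ is aligned with the first axis and the task reduces to estimating $\|f-f_0\|_{L^1}^2$ by the (rotation-invariant) right-hand side of \eqref{stab-inequ}.

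The first step is to build the reduced functional. For $\lambda\in\RR^2$ set $e_\lambda(\theta,v)=v^2/2-\lambda\cdot u(\theta)$ and let $\widetilde f_\lambda$ be the generalized symmetric rearrangement of $f_0$ along the level sets of $e_\lambda$, so that $\widetilde f_{(m_0,0)}=f_0$, the map $\lambda\mapsto\widetilde f_\lambda$ is smooth, and by a bathtub argument $\widetilde f_\lambda$ minimizes $g\mapsto\iint e_\lambda g\,d\theta dv$ among all nonnegative $g$ equimeasurable to $f_0$ (sub-level-set volumes of $e_\lambda$ depend only on $|\lambda|$, so equimeasurability is exact when $|\lambda|=m_0$ and extended by bathtub otherwise). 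Combining the identity
$$\ham(f)=\iint e_\lambda f\,d\theta dv+\tfrac12|\lambda|^2-\tfrac12|M_f-\lambda|^2$$
with this minimization and the choice $\lambda=M_f$ gives $\ham(g)\geq\ji(M_g)$ for every $g$ equimeasurable to $f_0$, where $\ji(m):=\ham(\widetilde f_m)+\tfrac12|m-M_{\widetilde f_m}|^2$. For a general $f$, interpolating between the analogous rearrangement surrogates attached to $f$ and to $f_0$ will produce precisely the three correction terms $(1+\|f\|_{L^1})\|f^*-f_0^*\|_{L^1}$, $\int s^2(f_0^\sharp-f^\sharp)_+\,ds$, and $\int\mu_{f_0}^2\beta_{f^*,f_0^*}\,ds$ appearing on the right-hand side of \eqref{stab-inequ}.

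The second step is to establish coercivity of $\ji$ at $(m_0,0)$. Since $\ji$ is rotation-invariant in $m$ and has a critical point at $(m_0,0)$ with $\ji((m_0,0))=\ham(f_0)$, coercivity amounts to verifying positivity of its radial second derivative. Differentiating twice in $|m|$ and changing variables $(\theta,v)\mapsto$ action-angle coordinates of the pendulum Hamiltonian $e_\lambda$ --- precisely the computation that yields the elliptic-integral formulation in the remark following Assumption~\ref{criterion} --- identifies this second derivative as $1-\kappa_0$. Under Assumption~\ref{criterion} this is strictly positive, giving
$$\ji(M_f)-\ji((m_0,0))\geq c\bigl(|M_f|-m_0\bigr)^2 = c\bigl|M_f-M_{f_0(\cdot-\theta_f)}\bigr|^2$$
as long as $||M_f|-m_0|$ is small enough, which is guaranteed by the smallness assumption $|M_f-M_{f_0(\cdot-\theta_f)}|<\delta$.

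The final step is to promote magnetization coercivity to $L^1$ coercivity. Decomposing $f-f_0=(f-\widetilde f_{M_f})+(\widetilde f_{M_f}-f_0)$, the second piece is bounded by $C\,||M_f|-m_0|$ using $C^1$ regularity of $\lambda\mapsto\widetilde f_\lambda$ and is absorbed into the previous step, while the first is controlled by a quantitative Poincar\'e-type strengthening of the bathtub inequality, yielding $\|f-\widetilde f_{M_f}\|_{L^1}^2\leq C[\ham(f)-\ji(M_f)]$ plus the three rearrangement correction terms. Collecting these bounds gives \eqref{stab-inequ}, and in the compactly supported case the last two corrections collapse into multiples of $\|f^*-f_0^*\|_{L^1}$, giving \eqref{stab-inequ-compact}. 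The hardest point will be the non-compactly supported case: the action-angle change of variables develops logarithmic singularities near the separatrix $\{e_\lambda=|\lambda|\}$, and the integrability assumption $\int_{-m_0}^{m_0}\log(m_0-e)F'(e)\,de<+\infty$ imposed in Assumption~\ref{hypF} when $m_0=e_*$ is precisely what is needed both to make sense of $\widetilde f_\lambda$ in this regime and to justify the differentiation under the integral sign in the Hessian computation of Step~2.
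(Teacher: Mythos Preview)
Your Steps~1 and~2 correctly reproduce the paper's strategy: the reduced functional $\ji$ (your formula agrees with \eqref{defJ} once one expands $\ham(\widetilde f_m)$ via the identity you wrote) and the coercivity $\ji''(m_0)=1-\kappa_0>0$ of Corollary~\ref{coroJ}. Your Step~3, however, diverges from the paper's argument and rests on two assertions that are not substantiated.

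The paper does \emph{not} pass through the intermediate state $\widetilde f_{M_f}$. Instead it invokes the functional inequality of Proposition~\ref{prop-ineq-quant} (itself an application of the extended rearrangement inequality of \cite{Lq}) with the \emph{translated steady state} $f_0(\cdot-\theta_f)$ as reference, obtaining directly
\[
\bigl(\|f-f_0(\cdot-\theta_f)\|_{L^1}+\|f_0\|_{L^1}-\|f\|_{L^1}\bigr)^2
\le K_0\Bigl[\ham(f)-\ham(f_0)+\tfrac12(|M_f|-m_0)^2\Bigr]+\text{corrections},
\]
after which $(|M_f|-m_0)^2$ is absorbed via coercivity and Proposition~\ref{prop1}. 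The point is that $f_0(\cdot-\theta_f)$ is a genuine steady state with magnetization modulus exactly $m_0$, so the constant $K_0$ is fixed once and for all and no uniformity in $M_f$ is required. Your decomposition would instead need (i) a Lipschitz bound $\|\widetilde f_{M_f}-f_0\|_{L^1}\le C\,||M_f|-m_0|$, which amounts to differentiating $f_0^\sharp\circ\alpha_m$ through the separatrix singularity of $\alpha_m'$, and (ii) a quantitative bathtub inequality at the moving potential $e_{M_f}$ with constant uniform over $|M_f|$ near $m_0$, i.e.\ a one-parameter version of Proposition~\ref{prop-ineq-quant}. Both are plausible but neither is free, and the paper avoids them entirely.

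One misattribution: the logarithmic integrability hypothesis in Assumption~\ref{hypF} is not used in the Hessian computation of $\ji$ (that differentiation is justified via \eqref{sfsharp}, which needs only $f_0\in L^1$). It enters instead in Proposition~\ref{prop-ineq-quant}, precisely to ensure that the constant $K(f_0)$ of \eqref{Kf0} is finite in the borderline case $e_*=m_0$.
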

The proof of this theorem is given in Section \ref{proof-thm} and uses several steps which are developed in the following sections.
In Section \ref{section2}, we introduce the generalized rearrangements with respect to the microscopic energy, which enable to define a reduced energy function depending on the magnetization vector only. In Section \ref{section3} we show that, under the stability criterion $\kappa_0<1$, the magnetization of the steady state is a strict local minimizer of this reduced energy function and, in Section \ref{section4}, we prove a functional inequality that enables to control $f-f_0$.

\subsection{Proof of the orbital stability of $f_0$}
In this subsection, we show how to derive a stability result for the HMF model directly from our main Theorem \ref{mainthm}. 
We distinguish the two cases: $e_*<+\infty$  and $e_*=+\infty$.

\bs
\ni
{\em Case 1: $e_*<+\infty$ }. In this case $f_0$ is compactly supported and we can apply
\fref{stab-inequ-compact}, that is we have
\begin{align}
\label{stab-inequ-fini}
 \|f- f_0(\cdot-\theta_f)\|_{L^1} ^2 \leq  C\left({\textcolor{white}\int}\hspace{-0.3cm} \ham(f)-\ham(f_0) + C ( {\textcolor{white}\int}\hspace{-0.2cm} 1+\|f\|_{L^1}) \|f^*-f_0^*\| \right)
 \end{align}
 for all $f$ satisfying $|M_f- M_{f_0(\cdot-\theta_{f})}|< \delta$. 
 Let $f_{init}\in L^1\left((1+|v|^2)dvd\theta\right)$ be any initial data   for the HMF equation \fref{sis1} such that $\theta_{f_{init}}=0$ and 
 $$ \|(1+|v|^2)(f_{init}-f_0)\|_{L^1}<\eta,$$ 
 where $0<\eta<\min(1,\delta/2)$ will be made precise later on. This implies in particular that
 \beq
 \label{MFL1}\left|M_{f_{init}} - M_{f_0} \right|\leq\|f_{init}-f_0\|_{L^1}< \eta< \delta/2,
 \eeq
 and then
 \begin{align*} 
 |\ham(f_{init})-\ham(f_0)| = &\left|\iint \left(\frac{v^2}{2}+\phi_0(\theta)\right)(f_{init}-f_0)d\theta dv - \frac{1}{2}\left|M_{f_{init}} - M_{f_0} \right|^2\right|\\
 & \leq (m_0+1)\eta.
 \end{align*}
 Now the contractivity property of the rearrangement implies that $\|f_{init}^*-f_0^*\|_{L^1} \leq  \|f_{init}-f_0\|_{L^1} <\eta$ and then
 \begin{align*}
 |\ham(f_{init})-\ham(f_0)| + C ( {\textcolor{white}\int}\hspace{-0.2cm}&1+\|f_{init}\|_{L^1}) \|f_{init}^*-f_0^*\| \\
 & \leq  \left[m_0+1+ C\left(2+\|f_0\|_{L^1}\right)\right]\eta.
 \end{align*}
 We then choose $\eta$ such that
\beq
\label{choixeta}
\eta <  \min\left(1, \delta/2,  \left[m_0+1+ C\left(2+ \|f_0\|_{L^1}\right)\right]^{-1} \delta^2/(4C)\right).
\eeq
Let now $f(t)$ be a solution to the HMF model with initial data $f_{init}$. {}From the conservation properties of this model, to wit 
$\calH(f(t)) \leq \calH(f_{init})$ and $f(t)^*= f_{init}^*,$
and from \fref{stab-inequ-fini} we then get
\begin{align*}
\|f(t)- f_0(\cdot-\theta_{f(t)})\|_{L^1} ^2& \leq C\left(\ham(f(t))-\ham(f_0)+ C ( {\textcolor{white}\int}\hspace{-0.2cm} 1+\|f(t)\|_{L^1}) \|f(t)^*-f_0^*\|\right)\\
& < \delta^2/4,  
\end{align*} 
as long as  $|M_{f(t)}- M_{f_0(\cdot-\theta_{f(t)})}| < \delta $.  
 In fact we shall prove that we have 
 \beq 
 \label{klamlam} |M_{f(t)}- M_{f_0(\cdot-\theta_{f(t)})}| < \delta, \ \ \forall t\geq 0.\eeq
 Indeed,  at $t=0$ we have $|M_{f(0)}- M_{f_0(\cdot-\theta_{f(0)}})| < \delta/2$ by assumption on $f_{init}$ (see \fref{MFL1}). If at some time $t$  we have $|M_{f(t)}- M_{f_0(\cdot-\theta_{f(t)}})| \geq  \delta$, then by continuity  in time there is some time $t_0$ such that $|M_{f(t_0)}- M_{f_0(\cdot-\theta_{f(t_0)}})| = 2\delta/3<\delta.$
We thus get 
$$\|f(t_0)- f_0(\cdot-\theta_{f(t_0)})\|_{L^1} < \delta/2, \ \ \forall t\geq 0. $$
But this implies
$$  2\delta/3= |M_{f(t_0)}- M_{f_0(\cdot-\theta_{f(t_0)})}| \leq \|f(t_0)- f_0(\cdot-\theta_{f(t_0)})\|_{L^1}  < \delta/2,$$
which is a contradiction, and claim \fref{klamlam} is proved.  We conclude from Theorem \ref{mainthm} that
\begin{equation}
\label{control}
\|f(t)- f_0(\cdot-\theta_{f(t)})\|_{L^1} ^2 \leq C\left(|\ham(f_{init})-\ham(f_0)| + C ( {\textcolor{white}\int}\hspace{-0.2cm}1+\|f_{init}\|_{L^1}) \|f_{init}^*-f_0^*\|\right),
\end{equation}
for all $t\geq 0$. The orbital stability of the solution $f(t)$ is then proved.

\bs

\bs
\ni
{\em Case 2: $e_*=+\infty$ }.  In this case $f_0$ is not compactly supported and we shall
use inequality \fref{stab-inequ} of Theorem \ref{mainthm}.  
The quantity $\mu_{f_0}(s)$ involved in \fref{stab-inequ} is no longer bounded and presents a singularity at $s=0$. Therefore we shall  need to prove the following claim: if $\|f_{init}^{n}-f_0\|_{L^1} \to 0$ then 
\beq
\label{hadra}
\int_0^{+\infty}s^2\left(f_0^\sharp(s)- f_{init}^{n\sharp}(s)\right)_+ ds \to 0\quad \mbox{and} 
\quad \int_0^{+\infty}\mu_{f_0}(s)^2\beta_{f_{init}^{n*},f_0^*}(s) ds \to 0 ,\eeq
as $n\to +\infty$ up to the extraction of a subsequence. Once this claim is proved, the rest of the stability proof is exactly the same as in the case of a compactly supported steady state $f_0$.
Let us then prove claim \fref{hadra}.  We start by proving the first limit of this claim. Assume that $\|f_{init}^{n}-f_0\|_{L^1} \to 0$. Since $\|f_{init}^{n\sharp}-f_0^\sharp\|_{L^1(\RR^+)} \leq\|f_{init}^n-f_0\|_{L^1} \to 0$, we deduce that   $s^2\left(f_0^\sharp(s)- f_{init}^{n\sharp}(s)\right)_+ \to 0$ as $n\to 0$ for almost very $s\geq 0$, up to a substraction of a subsequence. But we have
 $$ s^2\left(f_0^\sharp(s)- f_{init}^{n\sharp}(s)\right)_+\leq s^2f_0^\sharp(s), \quad \mbox{and} \quad
 \int_0^{+\infty} s^2f_0^\sharp(s)<+\infty,$$
 (see \fref{fsharpenergy}  for the second inequality).
 Therefore, by dominated convergence we can pass to the limit inside the integral and get the first convergence in claim \fref{hadra}.
 Now we prove the second limit of claim \fref{hadra}. Assume again that $\|f_{init}^{n}-f_0\|_{L^1} \to 0$, then 
$$\int_0^{+\infty} \beta_{f_{init}^{n*},f_0^*}(s) ds = \iint (f_0^*- f_{init}^{n*})_+ d\theta dv \leq \|f_{init}^{n*}-f_0^*\|_{L^1}\leq 
\|f_{init}^{n}-f_0\|_{L^1} \to 0$$
 as $n\to +\infty$. This means that 
 $\beta_{f_{init}^{n*},f_0^*}(s) \to 0$   for almost every $s\geq 0$, up to an extraction of a subsquence. We then conclude that the quantity $\mu_{f_0}(s)^2\beta_{f_{init}^{n*},f_0^*}(s)$ arising in  \fref{hadra} converges to $0$ for almost every $s\geq 0$ (up to an extraction). Therefore, to end the proof of the second  limit in claim \fref{hadra}, it is sufficient to dominate this quantity by an $L^1$ function in $s\in (0,\|f_0\|_{L^\infty})$ uniformly in $n$.  To this purpose, we observe that
 $\beta_{f_{init}^{n*},f_0^*}(s) \leq \mu_{f_0}(s)$ and then 
 $$\mu_{f_0}(s)^2\beta_{f^{n*},f_0^*}(s) \leq \mu_{f_0}(s)^3, \qquad \forall\ s> 0.$$
To prove that  the rhs of this inequality  is integrable on $\RR_+$, we write
$$\int_0^{+\infty} s^2 f_0^\sharp (s) ds = \int_ 0^{+\infty} s^2 \left(\int_0^{f_0^\sharp (s)}dt\right) ds =\int_ 0^{+\infty}
 \left(\int_{f_0^\sharp (s)>t}s^2ds\right) dt$$
 and using \fref{equivalence} we get
 $$ \int_0^{+\infty} s^2 f_0^\sharp (s) ds = \int_ 0^{+\infty}
 \left(\int_{0\leq s< \mu_{f_0}(t)}s^2ds\right) dt= \frac{1}{3} \int_ 0^{+\infty}\mu_{f_0}(t)^3 dt.$$ 
 Since from \fref{fsharpenergy} we have $\int_0^{+\infty} s^2 f_0^\sharp (s) ds<+\infty$, the proof of 
 claim \fref{hadra} is complete.   This ends the proof of the orbital stability in all cases.

\qed

\section{The reduced energy functional}
\label{section2}
The aim of this section is to introduce a reduced energy functional $\mathcal J(|M_f|)$ which depends only on the modulus of the magnetization and which is such that  $\mathcal J(|M_f|)-\mathcal J(m_0)$ (recall that $M_{f_0}=(m_0,0)^T$ with $m_0\geq 0$) is controlled by the relative nonlinear energy $\mathcal H(f)-\mathcal H(f_0)$, up to conserved quantities.

\subsection{Generalized rearrangements with respect to the microscopic energy}

Our purpose now is to define a generalized symmetric nonincreasing rearrangement with respect to the microscopic energy $e=\frac{v^2}{2}+\phi(\theta)$, where the potential $\phi$ is a given $\mathcal C^\infty$ function on $\TT$. We introduce the quantity
\begin{equation}
\label{aphi}
a_\phi(e)=\left\thickvert\left\{(\theta,v) \in \TT\times \Real\,:\,\,\frac{v^2}{2}+\phi(\theta)<e \right\}\right\thickvert,\quad \mbox{for all }e\in \RR.
\end{equation}
It has the explicit expression
$$a_\phi(e)=2\sqrt{2}\int_0^{2\pi}\sqrt{(e-\phi(\theta))_+}\,d\theta,$$
with the usual notation $x_+=\max(0,x)$. It is readily seen that $a_\phi$ is continuous on $\RR$, vanishes on $(-\infty,\min \phi]$ and is strictly increasing from $[\min \phi,+\infty)$ to $[0,+\infty)$. This enables to define its inverse $a_\phi^{-1}$ on $[0,+\infty)$. Note that, for all $e\in \RR$,
\begin{equation}
\label{encadrement0}
4\pi\sqrt{2}(e-\max\phi)_+^{1/2}\leq a_\phi(e)\leq 4\pi\sqrt{2}(e-\min\phi)_+^{1/2},
\end{equation}
which implies, for all $s\in \RR_+$,
\begin{equation}
\label{encadrement}
\frac{s^2}{32\pi^2}+\min \phi\leq a_\phi^{-1}(s)\leq \frac{s^2}{32\pi^2}+\max \phi.
\end{equation}
We now introduce the generalized rearrangement with respect to the microscopic energy.
\begin{lemma}
\label{lem1}
Let $\phi\in \mathcal C^\infty(\TT)$ and let $a_\phi$ be the function defined by \eqref{aphi}. Let $f\in L^1(\TT\times \RR)$, nonnegative. Then the  function
$$f^{*\phi}(\theta,v)=f^\sharp\left(a_\phi\left(\frac{v^2}{2}+\phi(\theta)\right)\right),\qquad (\theta,v)\in\TT\times \RR$$
is equimeasurable to $f$, that is $\mu_{f^{*\phi}}=\mu_f$, where $\mu_f$ is defined by \eqref{muf}. In the sequel, the function $f^{*\phi}$ is called (decreasing) rearrangement with respect to the microscopic energy $\frac{v^2}{2}+\phi(\theta)$.
\end{lemma}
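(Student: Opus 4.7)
The plan is to prove equimeasurability by a direct computation: for every $t \geq 0$ I would unfold the definition of $f^{*\phi}$, invoke the equivalence \eqref{equivalence} to eliminate $f^\sharp$, and then use that $a_\phi$ is continuous on $\RR$ and strictly increasing on $[\min\phi,+\infty)$ (as recorded just before the statement) to rewrite the resulting inequality as a sublevel set of $v^2/2+\phi$, whose measure is explicitly $a_\phi$ of its level by definition.

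Concretely, fix $t>0$, so that $\mu_f(t)<+\infty$ by the observation after \eqref{muf}. The definition of $f^{*\phi}$ together with the equivalence \eqref{equivalence} applied to $s=a_\phi(v^2/2+\phi(\theta))$ shows that $f^{*\phi}(\theta,v)>t$ is equivalent to
$$a_\phi\!\left(\frac{v^2}{2}+\phi(\theta)\right)<\mu_f(t).$$
Since $a_\phi$ maps $[\min\phi,+\infty)$ continuously and bijectively onto $[0,+\infty)$, I introduce the unique $e = a_\phi^{-1}(\mu_f(t)) \geq \min\phi$. Because $v^2/2+\phi(\theta)\geq\min\phi$ always and $a_\phi$ is strictly increasing on $[\min\phi,+\infty)$, the preceding inequality rewrites as $v^2/2+\phi(\theta)<e$. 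The definition \eqref{aphi} of $a_\phi$ then yields
$$\mu_{f^{*\phi}}(t)=\left|\left\{(\theta,v)\in\TT\times\RR\,:\,\frac{v^2}{2}+\phi(\theta)<e\right\}\right|=a_\phi(e)=\mu_f(t).$$

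To finish I would treat the value $t=0$ separately, since then $\mu_f(0)$ may be infinite. If $\mu_f(0)<+\infty$ the previous argument applies verbatim; otherwise every $(\theta,v)$ satisfies $a_\phi(v^2/2+\phi(\theta))<+\infty=\mu_f(0)$, so $f^{*\phi}>0$ on the whole domain and $\mu_{f^{*\phi}}(0)=+\infty=\mu_f(0)$. I do not expect any serious obstacle here: the only subtle point is that $a_\phi$ fails to be strictly increasing on $(-\infty,\min\phi]$, but this is harmless because the energy $v^2/2+\phi(\theta)$ takes values exactly in $[\min\phi,+\infty)$, on which the monotonicity is strict.
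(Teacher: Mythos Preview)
Your argument is correct and follows essentially the same route as the paper: unfold the definition, apply the equivalence \eqref{equivalence}, use the bijectivity of $a_\phi$ on $[\min\phi,+\infty)$, and read off $a_\phi\circ a_\phi^{-1}(\mu_f(t))=\mu_f(t)$. Your separate treatment of $t=0$ when $\mu_f(0)=+\infty$ is in fact slightly more careful than the paper's version, which writes $a_\phi^{-1}(\mu_f(t))$ uniformly for all $t\geq 0$.
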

\begin{proof}
{}From the right continuity of $\mu_f$, we infer that, for all $s\geq 0$, $t\geq 0$,
\begin{equation}
\label{equivalence}
f^\sharp(s)>t \quad\Longleftrightarrow\quad  \mu_f(t)>s.
\end{equation}
Therefore, for all $t\geq 0$,
\begin{align*}
 \mu_{f^{*\phi}}(t)&=\left\thickvert\left\{(\theta,v) \in \TT\times \RR:\quad  f^\sharp\left(a_\phi\left(\frac{v^2}{2}+\phi(\theta)\right)\right)>t\right\}\right\thickvert\\
&=\left\thickvert\left\{(\theta,v) \in \TT\times \RR:\quad  a_\phi\left(\frac{v^2}{2}+\phi(\theta)\right)<\mu_f(t)\right\}\right\thickvert\\
&=\left\thickvert\left\{(\theta,v) \in \TT\times \RR:\quad  \frac{v^2}{2}+\phi(\theta)<a_\phi^{-1}(\mu_f(t))\right\}\right\thickvert\\
&=a_\phi\left(a_\phi^{-1}(\mu_f(t))\right)=\mu_f(t).
\end{align*}
\end{proof}

Finally, we state a technical lemma dealing with the case of potentials which have the special form of potentials of the HMF model. For $e\in \RR$, $m\in \RR_+^*$ and $\phi(\theta)=-m\cos \theta$ we denote
$$\alpha_m(e)=a_\phi(e)=2\sqrt{2}\int_0^{2\pi}\sqrt{(e+m \cos \theta)_+}\,d\theta=\sqrt{m}\,\alpha_1\left(\frac{e}{m}\right)$$
and introduce the angle
\begin{equation}
\label{defthetam}
\theta_m(e)=\left\{\begin{array}{ll}0,\qquad &\mbox{if }e\leq -m,\\
\mbox{arccos }(-e/m)\in (0,\pi),\qquad &\mbox{if }-m<e<m,\\
\pi,\qquad &\mbox{if }e\geq m.
\end{array}\right.
\end{equation}
The function $\alpha_1(e)$ and its derivative $\alpha'_1(e)$ are represented on Figure \ref{fig1}. 
\begin{figure}[!htbp]
  \centerline{
\includegraphics[width=.5\textwidth]{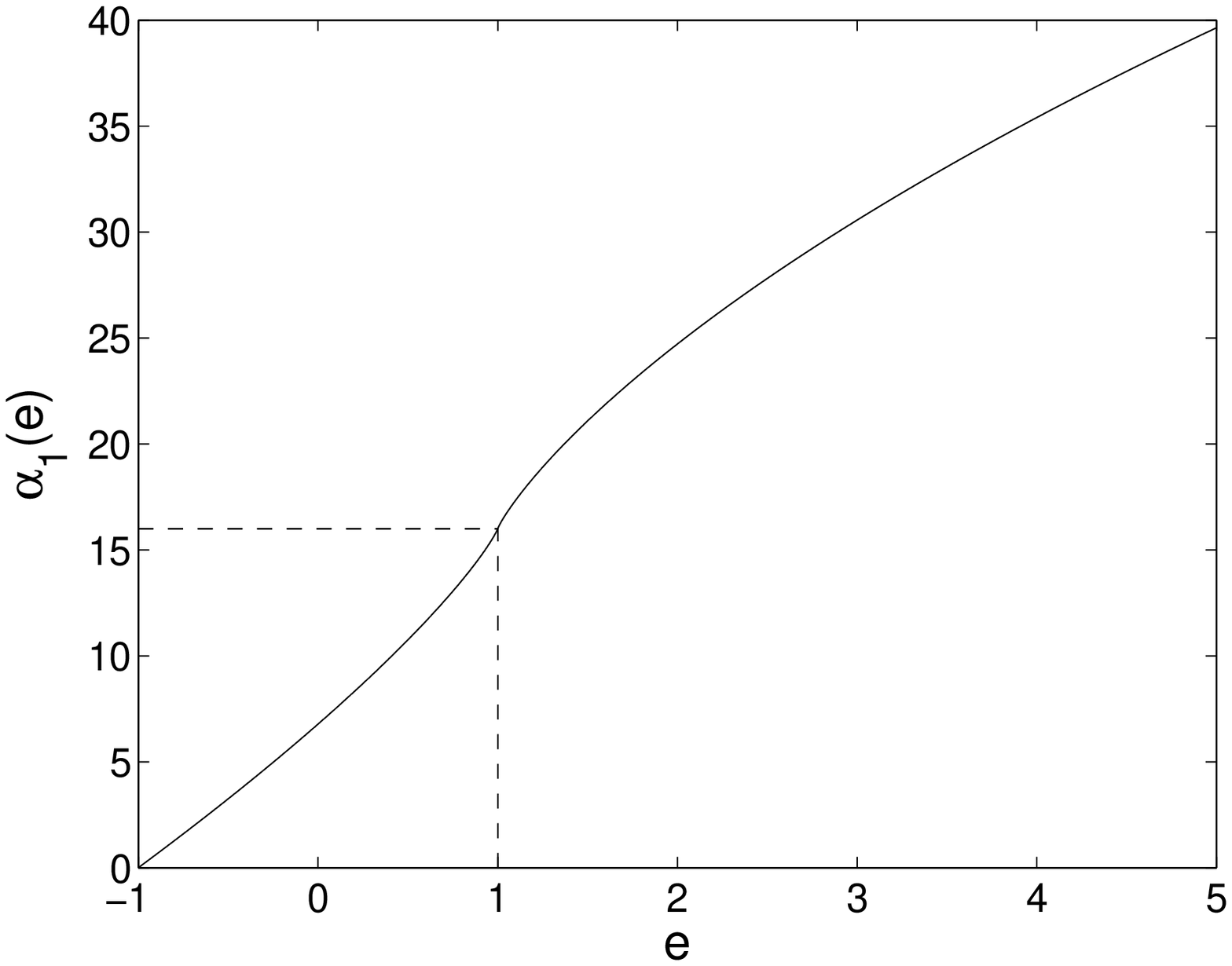}\hspace*{-8mm}
 \includegraphics[width=.5\textwidth]{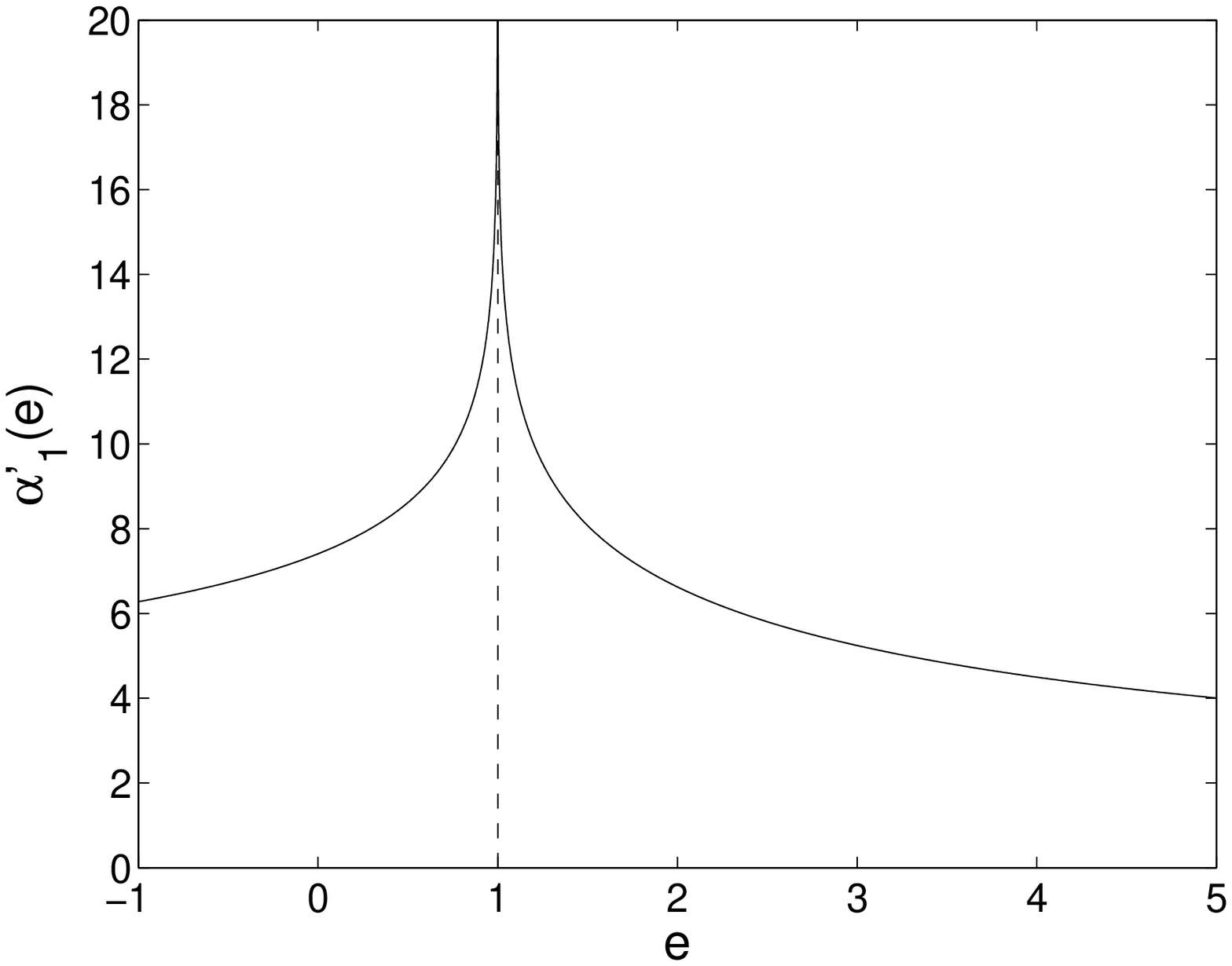}}
      \caption{Function $\alpha_1$ (left) and its derivative $\alpha'_1$ (right).}
\label{fig1}
\end{figure}
The proof of the following lemma is deffered to the Appendix.
\begin{lemma}[Properties of the function $\alpha_1$]
\label{lemtechnique}
Let 
\begin{equation}
\label{alpha1}
\alpha_1(e)=4\sqrt{2}\int_0^{\theta_1(e)}(e+\cos \theta)^{1/2}\,d\theta \qquad \mbox{for }e\in \RR.
\end{equation}
This function satisfies the following properties:\\
(i) $\alpha_1$ is a continuous nondecreasing function from $\RR$ to $\RR_+$ and $\alpha_1(e)=0$ for $e\leq -1$.\\
(ii) $\alpha_1$ is a strictly increasing and strictly convex $\mathcal C^1$ function on $[-1,1)$. Its derivative for $e\in(-1,1)$ is given by
\begin{equation}
\label{aphi'}
\alpha_1'(e)=2\sqrt{2}\int_0^{\theta_1(e)}(e+\cos \theta)^{-1/2}\,d\theta.
\end{equation}
and its right-derivative at $e=-1$ is equal to $2\pi$.\\
(iii) $\alpha_1$ is a strictly increasing and strictly concave $\mathcal C^1$ function on $(1,+\infty)$. Its derivative for $e\in(1,\infty)$ is still given by \eqref{aphi'} and we have
$$\alpha_1(e)\sim4\pi\sqrt{2e},\qquad \alpha'_1(e)\sim2\pi\sqrt{2/e}\quad \mbox{as}\quad e\to +\infty.$$
(iv) We have $\alpha_1(1)=16$ and
\begin{equation}
\label{equiv}
\alpha'_1(e)\sim -2\log |e-1|\quad \mbox{as}\quad e\to 1.
\end{equation}
(v) The inverse $\alpha_1^{-1}$ of the function $\alpha_1\,:[-1,+\infty)\mapsto [0,+\infty)$ is a strictly increasing $\mathcal C^1$ function, defined on $[0,+\infty)$, satisfying
$$(\alpha_1^{-1})'(s)=\frac{1}{\alpha_1'\circ \alpha_1^{-1}(s)}\quad \mbox{for}\quad s\in \RR_+\setminus\{0,16\},$$
with $\alpha_1'$ given by \eqref{aphi'}, and
$$(\alpha_1^{-1})'(0)=\frac{1}{2\pi},\qquad (\alpha_1^{-1})'(16)=0.$$
\end{lemma}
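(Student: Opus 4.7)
The plan is to treat three regions separately, according to the piecewise definition of $\theta_1(e)$. On $(-\infty,-1]$, $\theta_1\equiv 0$, so $\alpha_1\equiv 0$, giving that part of (i) trivially. On $(1,+\infty)$, $\theta_1\equiv\pi$ is constant and $(e+\cos\theta)^{1/2}$ is smooth in $(e,\theta)$ and bounded below by $\sqrt{e-1}>0$ on $[0,\pi]$, so differentiating twice under the integral sign yields
\[
\alpha_1'(e) = 2\sqrt 2\int_0^\pi(e+\cos\theta)^{-1/2}\,d\theta>0, \qquad \alpha_1''(e) = -\sqrt 2\int_0^\pi(e+\cos\theta)^{-3/2}\,d\theta<0,
\]
which establishes (iii) directly (with $\theta_1(e)=\pi$ in \eqref{aphi'}). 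The asymptotics as $e\to +\infty$ follow from factoring $\sqrt e$ out and applying dominated convergence to $\int_0^\pi(1+\cos\theta/e)^{\pm 1/2}\,d\theta\to\pi$.

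The substantive difficulty is the middle regime $-1<e<1$, where $\theta_1(e)=\arccos(-e)$ varies with $e$ and the integrand $(e+\cos\theta)^{1/2}$ vanishes at the upper endpoint, so naive differentiation under the integral is not legal. To circumvent this I would perform the substitution $\sin(\theta/2) = \sqrt{(e+1)/2}\,\sigma$, $\sigma\in[0,1]$. Using $e+\cos\theta = (e+1)(1-\sigma^2)$ and $d\theta = \sqrt{2(e+1)}\,(1-(e+1)\sigma^2/2)^{-1/2}\,d\sigma$, I obtain the regularised form
\[
\alpha_1(e) = 8(e+1)\int_0^1\frac{\sqrt{1-\sigma^2}}{\sqrt{1-(e+1)\sigma^2/2}}\,d\sigma,
\]
whose integrand is jointly smooth on $(-1,1)\times[0,1]$ and uniformly bounded on compact subsets. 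Differentiation under the integral is then justified; reversing the substitution shows that the derivative matches \eqref{aphi'}, which gives $\mathcal C^1$ regularity together with strict positivity and the continuity of $\alpha_1$ at $e=-1$ (completing (i)). A second differentiation, in either form, produces $\alpha_1''>0$, giving the strict convexity claim of (ii). The right-derivative at $e=-1$ is read off directly from the regularised form: $\alpha_1(-1+h)/h = 8\int_0^1\sqrt{1-\sigma^2}/\sqrt{1-h\sigma^2/2}\,d\sigma \to 8\cdot\pi/4 = 2\pi$ as $h\to 0^+$.

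For (iv), the value $\alpha_1(1) = 16$ follows from the identity $(1+\cos\theta)^{1/2} = \sqrt 2\,\cos(\theta/2)$ on $[0,\pi]$ and direct integration. The logarithmic blow-up $\alpha_1'(e)\sim-2\log|e-1|$ is the most delicate estimate: I would localise \eqref{aphi'} near the singular endpoint (namely $\theta=\pi$ for $e>1$, and $\theta=\theta_1(e)$ for $e<1$), use the Taylor expansion $\cos\theta\approx -1+(\pi-\theta)^2/2$ and the rescaling $\pi-\theta = \sqrt{2|e-1|}\,t$ to reduce the divergent contribution to $\sqrt 2\int_0^{t_0}(1+t^2)^{-1/2}\,dt\sim \sqrt 2\log(2t_0/\sqrt{2|e-1|})$, and then multiply by $2\sqrt 2$, checking that the two sides of $e=1$ yield the same coefficient. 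Finally, (v) is the inverse function theorem applied on $(0,16)\cup(16,+\infty)$, where $\alpha_1'\circ\alpha_1^{-1}$ is positive and finite, together with the explicit boundary values $(\alpha_1^{-1})'(0) = 1/(2\pi)$ (from the right-derivative at $e=-1$ in (ii)) and $(\alpha_1^{-1})'(16) = 0$ (from the logarithmic divergence at $e=1$ in (iv)). The main obstacle throughout is the behaviour on $-1<e<1$, which is the only place where both the integration domain and the integrand depend singularly on $e$; the substitution that recasts $\alpha_1$ as $(e+1)$ times a smooth integral is the key device that unlocks the whole lemma.
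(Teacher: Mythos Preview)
Your proof is correct and takes a genuinely different route from the paper. For item (ii), the paper substitutes $u=\cos\theta$, justifies differentiation under the integral by a generalized dominated convergence argument (via the Br\'ezis--Lieb lemma), and then obtains strict convexity from a further substitution $t=(u+e)/(1-u)$ that makes the monotonicity of $\alpha_1'$ manifest. Your elliptic-integral substitution $\sin(\theta/2)=\sqrt{(e+1)/2}\,\sigma$ is more direct: it produces the smooth representation
\[
\alpha_1(e)=8(e+1)\int_0^1\frac{\sqrt{1-\sigma^2}}{\sqrt{1-(e+1)\sigma^2/2}}\,d\sigma,
\]
from which the $\mathcal C^1$ regularity, strict convexity, and the right-derivative $2\pi$ at $e=-1$ all follow by elementary differentiation under the integral sign. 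For item (iv), the paper makes further exact substitutions (eventually $s=1/t$ for $e<1$ and $t=(1-\cos\theta)/(1+\cos\theta)$ for $e>1$) and compares $\alpha_1'$ with two explicitly computable integrals $I_1,I_2$ to extract the logarithmic constant together with a uniform error bound; your Taylor-expansion-and-rescaling approach is the matched-asymptotics version of the same computation and yields the same coefficient, though in writing it out you should note that for $e<1$ the model integral near the endpoint becomes $\int_1^{t_0}(t^2-1)^{-1/2}\,dt$ rather than $\int_0^{t_0}(1+t^2)^{-1/2}\,dt$ (both have the same $\log t_0$ leading behaviour, so your conclusion stands). Items (i), (iii), and (v) are handled essentially the same way in both arguments.
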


\subsection{Reduction to a functional of the magnetization vector}

In this subsection, we prove the following result.
\begin{proposition}
\label{prop1}
For all $f\in L^1((1+|v|^2)d\theta dv)$, we have
\begin{align}
\label{inegreduction}
\calJ(|M_f|)-\calJ(|M_{f_0}|)&\leq \calH(f)-\calH(f_0)+ (2m_0+3\|f\|_{L^1})\|f^*-f_0^*\|_{L^1}\\
&\quad +\int_{0}^{+\infty}
 s^2\,\left(f_0^\sharp(s)-f^\sharp(s)\right)_+ ds
\nonumber 
\end{align}
where, for all $m\in \RR_+$,
\begin{equation}
\label{defJ}
\calJ(m)=\frac{m^2}{2}+\int_{-\infty}^{+\infty} \int^{2 \pi}_{0}\left(\frac{v^2}{2}+\phi \right)f_0^{*\phi}d\theta dv
\end{equation}
with $\phi(\theta)=-m\cos \theta$.
\end{proposition}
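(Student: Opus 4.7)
The plan is to rewrite $\calJ(|M_f|)-\calH(f)$ as an integral and control it via a rearrangement argument, after first observing the identity $\calJ(m_0) = \calH(f_0)$. Set $\tilde m = |M_f|$ and $\tilde\phi(\theta) = \phi_f(\theta) = -\tilde m\cos(\theta-\theta_f)$. Since $f_0 = F(e_0)$ is already nonincreasing in $e_0$, Lemma \ref{lem1} gives $f_0^{*\phi_0}=f_0$, and using $\iint\phi_0 f_0 = -M_{f_0}\cdot M_{f_0}=-m_0^2$ one checks that $\calJ(m_0) = \calH(f_0)$. It therefore suffices to bound $\calJ(\tilde m)-\calH(f)$ from above by the correction terms of \eqref{inegreduction}.

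The analogous computation $\iint\tilde\phi f = -\tilde m^2$ yields $\calH(f) = \iint(v^2/2+\tilde\phi)f + \tilde m^2/2$, and since $\calJ(\tilde m)$ is invariant under the rotation $\theta\mapsto \theta-\theta_f$, one may rewrite $\calJ(\tilde m) = \tilde m^2/2+\iint(v^2/2+\tilde\phi) f_0^{*\tilde\phi}\,d\theta dv$, where $f_0^{*\tilde\phi}$ denotes the rearrangement of $f_0$ with respect to the microscopic energy $\tilde e := v^2/2+\tilde\phi$. Subtracting yields the exact identity
\[
\calJ(\tilde m)-\calH(f)=\iint \tilde e\,(f_0^{*\tilde\phi}-f)\,d\theta dv.
\]

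The key step is a Hardy-Littlewood rearrangement inequality adapted to the microscopic energy: $\iint \tilde e\,f \geq \iint \tilde e\, f^{*\tilde\phi}$. This holds because $f$ and $f^{*\tilde\phi}$ are equimeasurable (Lemma \ref{lem1}) and, for every $t\geq 0$, the superlevel set $\{f^{*\tilde\phi}>t\}$ coincides with the sublevel set of $\tilde e$ of the same measure $\mu_f(t)$; a bathtub argument at each level $t$ combined with layer-cake integration in $t$ concludes. Replacing $f$ by $f^{*\tilde\phi}$ in the identity above at the cost of a nonnegative error produces
\[
\calJ(\tilde m)-\calH(f) \leq \iint \tilde e\,(f_0^{*\tilde\phi}-f^{*\tilde\phi})\,d\theta dv.
\]

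Finally, since both $f_0^{*\tilde\phi}$ and $f^{*\tilde\phi}$ depend on $(\theta,v)$ only through $\tilde e$ and $a_{\tilde\phi}(e) = |\{\tilde e<e\}|$, the change of variable $s = a_{\tilde\phi}(\tilde e)$ converts the integral into a one-dimensional one:
\[
\iint\tilde e\,(f_0^{*\tilde\phi}-f^{*\tilde\phi})\,d\theta dv = \int_0^{+\infty} a_{\tilde\phi}^{-1}(s)\bigl(f_0^\sharp(s)-f^\sharp(s)\bigr)ds.
\]
Splitting $f_0^\sharp-f^\sharp = (f_0^\sharp-f^\sharp)_+ - (f^\sharp-f_0^\sharp)_+$ and using the two-sided bound $|a_{\tilde\phi}^{-1}(s)-s^2/(32\pi^2)|\leq \tilde m$ from \eqref{encadrement}, together with $\tilde m=|M_f|\leq \|f\|_{L^1}$, produces the desired inequality, the factor $(2m_0+3\|f\|_{L^1})$ in \eqref{inegreduction} being a comfortable upper bound for $\tilde m$. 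The main technical point is justifying the Hardy-Littlewood step rigorously: one must ensure that $v^2 f^{*\tilde\phi}$ is integrable so that the layer-cake Fubini exchange is legitimate, which follows from the asymptotics of $a_{\tilde\phi}$ at infinity combined with the assumption $f\in L^1((1+v^2)d\theta dv)$.
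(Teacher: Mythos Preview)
Your proof is correct and follows essentially the same route as the paper. The paper decomposes $\calH(f)-\calH(f_0)$ into three pieces $I_1+I_2+I_3+\frac12|M_f-M_{f_0}|^2$, identifies $I_3+\frac12|M_f-M_{f_0}|^2=\calJ(|M_f|)-\calJ(m_0)$, shows $I_1\geq 0$ by the same bathtub argument you call Hardy--Littlewood, and bounds $I_2$ from below via the change of variable $s=a_{\phi_f}(e)$ (their Lemma \ref{lemchangevar}); your reorganization around the identities $\calJ(m_0)=\calH(f_0)$ and $\calJ(\tilde m)-\calH(f)=\iint\tilde e\,(f_0^{*\tilde\phi}-f)$ is equivalent, since your final integral $\int a_{\tilde\phi}^{-1}(s)(f_0^\sharp-f^\sharp)\,ds$ is exactly $-I_2$ and your rearrangement inequality is exactly $I_1\geq 0$. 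Your final estimate via $|a_{\tilde\phi}^{-1}(s)-s^2/(32\pi^2)|\leq\tilde m$ is in fact slightly tighter than the paper's (it yields a constant $\tilde m\leq\|f\|_{L^1}$ rather than $2m_0+3\|f\|_{L^1}$), which is harmless.
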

\begin{proof}
Writing the difference $\calH(f)-\calH(f_0)$ between the nonlinear energies as
\begin{align*}
\ham(f)-\ham(f_0)=& \iint\left(\frac{v^2}{2}+\phi_f \right)\left(f-f_0\right) d\theta dv-\frac{1}{2}\left(|M_f|^2-|M_{f_0}|^2\right)- \iint \phi_f (f-f_0)d\theta dv\\
=& \iint\left(\frac{v^2}{2}+\phi_f \right)\left(f-f^{*\phi_f}\right) d\theta dv+\iint\left(\frac{v^2}{2}+\phi_f \right)\left(f^{*\phi_f}-f_0^{*\phi_f}\right) d\theta dv\\
&+\iint\left(\frac{v^2}{2}+\phi_f \right)\left(f_0^{*\phi_f}-f_0\right) d\theta dv+\frac{1}{2}|M_f-M_{f_0}|^2\\
=& I_1+I_2+I_3+\frac{1}{2}|M_f-M_{f_0}|^2,
\end{align*}
we organize the proof in three steps.

\bs
\ni
{\em Step 1: Identification of $\calJ(|M_f|)-\calJ(|M_{f_0}|)$.}
\nopagebreak

\ms
\ni
Let us first prove that
\begin{equation}
\label{fixedpoint}
f_0=f_0^{*\phi_0},
\end{equation}
which amounts to proving that
\begin{equation}
\label{fixedpointbis}
F(e)=f_0^\sharp\circ a_{\phi_0}(e),\qquad \forall e\geq \min \phi_0.
\end{equation}
Recall that $a_{\phi_0}$ is invertible from $[\min \phi_0,+\infty)$ to $[0,+\infty)$ and denote $G=F\circ a_{\phi_0}^{-1}$ on $[0,+\infty)$. Recall also that $F$ is assumed to be continuously decreasing. Hence, so is the function $G$ and then it is standard that $G^\sharp=G$, see for instance \cite{rakotoson}. Now, for all $t\geq 0$,
\begin{align*}
\mu_{f_0}(t)&=\left\thickvert\left\{(\theta,v) \in \TT\times \Real\,:\,\,F\left(\frac{v^2}{2}+\phi_0(\theta)\right)>t \right\}\right\thickvert\\
&=\left\thickvert\left\{(\theta,v) \in \TT\times \Real\,:\,\,G\circ a_{\phi_0}\left(\frac{v^2}{2}+\phi_0(\theta)\right)>t \right\}\right\thickvert\\
&=\left\thickvert\left\{(\theta,v) \in \TT\times \Real\,:\,\,G^\sharp\circ a_{\phi_0}\left(\frac{v^2}{2}+\phi_0(\theta)\right)>t \right\}\right\thickvert.
\end{align*}
Hence, applying the \eqref{equivalence} to the function
$$\mu_G(s)=\left|\left\{t\geq 0:\, G(t)>s\right\}\right|,\quad \mbox{for all }s\geq 0,$$
and to its pseudo-inverse $G^\sharp$, we get
\begin{align*}
\mu_{f_0}(t)&=\left\thickvert\left\{(\theta,v) \in \TT\times \Real\,:\,\,a_{\phi_0}\left(\frac{v^2}{2}+\phi_0(\theta)\right)<\mu_{G}(t) \right\}\right\thickvert\\
&=\left\thickvert\left\{(\theta,v) \in \TT\times \Real\,:\,\,\frac{v^2}{2}+\phi_0(\theta)<a_{\phi_0}^{-1}(\mu_{G}(t)) \right\}\right\thickvert\\
 &=a_{\phi_0}\circ a_{\phi_0}^{-1}(\mu_{G}(t))=\mu_{G}(t).
 \end{align*}
 {}From this, we deduce that $f_0^\sharp=G^\sharp=G$, which gives \eqref{fixedpointbis} and ends the proof of \eqref{fixedpoint}.
 Consequently,
\begin{align*}
I_3&+\frac{1}{2}|M_f-M_{f_0}|^2\\
&=\iint\left(\frac{v^2}{2}+\phi_f \right)\left(f_0^{*\phi_f}-f_0\right) d\theta dv+\frac{1}{2}|M_f-M_{f_0}|^2\\
&=\iint\left(\frac{v^2}{2}+\phi_f \right)\left(f_0^{*\phi_f}-f_0^{*\phi_0}\right) d\theta dv+\frac{1}{2}|M_f-M_{f_0}|^2\\
&=\iint\left(\frac{v^2}{2}+\phi_f \right)f_0^{*\phi_f}d\theta dv-\iint\left(\frac{v^2}{2}+\phi_0\right)f_0^{*\phi_0} d\theta dv\\
&\qquad+\int\left(\phi_0-\phi_f\right)\rho_{f_0}\,d\theta+\frac{1}{2}|M_f-M_{f_0}|^2\\
&=\iint\left(\frac{v^2}{2}+\phi_f \right)f_0^{*\phi_f}d\theta dv+\frac{1}{2}|M_f|^2-\iint\left(\frac{v^2}{2}+\phi_0\right)f_0^{*\phi_0} d\theta dv-\frac{1}{2}|M_{f_0}|^2.
\end{align*} 
We observe now that $\phi_f$ can be written as $\phi_f(\theta)=- |M_f|\cos(\theta-\theta_M)$ for some $\theta_M\in\TT$. Hence, by periodicity, we have
$$\iint\left(\frac{v^2}{2}+\phi_f \right)f_0^{*\phi_f}d\theta dv+\frac{1}{2}|M_f|^2=\mathcal J(|M_f|),$$
where $\mathcal  J$ is defined by \fref{defJ}, 
and the same holds for $\phi_0$. We thus have
 $$I_3+\frac{1}{2}|M_f-M_{f_0}|^2=\mathcal J(|M_f|)-\mathcal J(|M_{f_0}|).$$

\bs
\ni
{\em Step 2: Positivity of $I_1$.}
\nopagebreak

\ms
\ni
We have, using Fubini,
\begin{align*}
I_1&=\iint\left(\frac{v^2}{2}+\phi_f \right)\left(f-f^{*\phi_f}\right) d\theta dv\\
&=\iint\left(\frac{v^2}{2}+\phi_f \right)\left(\int_0^{f}dt-\int_0^{f^{*\phi_f}}dt\right) d\theta dv\\
&=\int_0^{+\infty}\left(\iint_{f>t}\left(\frac{v^2}{2}+\phi_f \right)d\theta dv-\iint_{f^{*\phi_f}>t}\left(\frac{v^2}{2}+\phi_f \right)d\theta dv\right) dt\\
&=\int_0^{+\infty}\left(\iint_{A(t)}\left(\frac{v^2}{2}+\phi_f \right)d\theta dv-\iint_{B(t)}\left(\frac{v^2}{2}+\phi_f \right)d\theta dv\right) dt
\end{align*}
where, for all $t\geq 0$, we have denoted
\begin{align*}
A(t)&=\left\{(\theta,v) \in \TT\times \Real\,:\,f^{*\phi_f}(\theta,v)\leq t<f(\theta,v)\right\},\\
B(t)&=\left\{(\theta,v) \in \TT\times \Real\,:\,f(\theta,v)\leq t<f^{*\phi_f}(\theta,v)\right\}.
\end{align*}
Since $f^{*\phi_f}$ is a decreasing function of $\frac{v^2}{2}+\phi_f$, we clearly have
$$\forall (\theta,v)\in A(t),\quad \forall (\theta',v')\in B(t),\qquad \frac{v^2}{2}+\phi_f(\theta)>\frac{v'^2}{2}+\phi_f(\theta').$$
Moreover, from the equimeasurability of $f$ and $f^{*\phi_f}$, we have $|A(t)|=|B(t)|$.
Consequently, we obtain $I_1\geq 0$.

\bs
\ni
{\em Step 3: Control of $|I_2|$ by $\|f^*-f_0^*\|_{L^1}$.}
\nopagebreak

\ms
\ni
Let us first state an elementary result.
\begin{lemma}
\label{lemchangevar}
Let $\phi(\theta)=-m\cos (\theta-\theta_0)$ for $(m,\theta_0)\in \RR_+\times \TT$. Then, for all $f\in L^1_+(\TT\times \RR)$, we have
\begin{equation}
\label{dang}
\iint\left(\frac{v^2}{2}+\phi(\theta) \right)f^{*\phi}(\theta,v) d\theta dv=\int_{0}^{+\infty}f^\sharp(s) a_{\phi}^{-1}(s) ds.
\end{equation}
\end{lemma}
\begin{proof}[Proof of Lemma \ref{lemchangevar}]
By a first change of variable with respect to $v$: $e=\frac{v^2}{2}+\phi(\theta)$, we get
\begin{align*}
\iint\left(\frac{v^2}{2}+\phi \right)f^{*\phi} d\theta dv
&=\sqrt{2}\int_0^{2\pi}\int_{\phi(\theta)}^{+\infty}f^\sharp\circ a_{\phi}(e) e(e-\phi(\theta))^{-1/2}de d\theta\\&=\sqrt{2}\int_{-m}^{+\infty}\int_{\phi(\theta)<e}f^\sharp\circ a_{\phi}(e) e(e-\phi(\theta))^{-1/2} d\theta de.
\end{align*}
Now, if $m>0$, we deduce from Lemma \ref{lemtechnique} that $e\mapsto a_\phi(e)=\sqrt{m}\alpha_1(e/m)$ is a strictly increasing $\mathcal C^1$ diffeomorphims from $E_m=(-m,m)\cup(m,+\infty)$ onto $\RR_+^*$. Moreover, from \eqref{aphi'}, we get
$$a_\phi'(e)=\sqrt{2}\int_{\phi(\theta)<e}(e-\phi(\theta))^{-1/2}d\theta,$$
and
$$\iint\left(\frac{v^2}{2}+\phi \right)f^{*\phi} d\theta dv
=\int_{e\in E_m}f^\sharp\circ a_{\phi}(e) e a_{\phi}'(e)de,$$ 
so, performing the change of variable $s=a_\phi(e)$ on $E_m$, we obtain \eqref{dang}. If $m=0$, we observe that $a_\phi(e)=4\pi \sqrt{2e}$, $a_\phi^{-1}(s)=\frac{s^2}{32\pi^2}$ and
\begin{align*}
\iint\left(\frac{v^2}{2}+\phi \right)f^{*\phi} d\theta dv
&=2\pi\sqrt{2}\int_{0}^{+\infty}f^\sharp\left(4\pi \sqrt{2e}\right) \sqrt{e}de=\int_{0}^{+\infty}f^\sharp(s)\frac{s^2}{32\pi^2}ds.
\end{align*}
The proof of the lemma is complete.
\end{proof}
{}From \eqref{dang}, we deduce
\begin{align*}
I_2&=\iint\left(\frac{v^2}{2}+\phi_f \right)\left(f^{*\phi_f}-f_0^{*\phi_f}\right) d\theta dv\\
&=\int_{0}^{+\infty}\left(f^\sharp(s)-f_0^\sharp(s)\right) a_{\phi_f}^{-1}(s) ds\\
&=\int_{0}^{+\infty}\left(f^\sharp(s)-f_0^\sharp(s)\right) \left(a_{\phi_f}^{-1}(s)-\min \phi_f\right) ds+\min \phi_f\int_{0}^{+\infty}\left(f^\sharp(s)-f_0^\sharp(s)\right)  ds\\
&\geq \int_{f^\sharp(s)<f_0^\sharp(s)}\left(a_{\phi_f}^{-1}(s)-\min \phi_f\right)\left(f^\sharp(s)-f_0^\sharp(s)\right)  ds-\|\phi_f\|_{L^\infty}\|f^\sharp-f_0^\sharp\|_{L^1}\\
&\geq -\int_0^{+\infty} a_{\phi_f}^{-1}(s) 
\left(f_0^\sharp(s)-f^\sharp(s)\right)_+ ds-2\|\phi_f\|_{L^\infty}\|f^\sharp-f_0^\sharp\|_{L^1},
\end{align*}
where we used that $a_{\phi_f}^{-1}(s)\geq \min \phi_f$ from \eqref{encadrement}. 
Again from \eqref{encadrement}, we obtain
$$a_{\phi_f}^{-1}(s)\leq a_{\phi_{f_0}}^{-1}(s)-\min \phi_0+\max\phi_f\leq
 a_{\phi_{0}}^{-1}(s)-\min \phi_0+\|\phi_f\|_{L^\infty}$$
and thus
\begin{align*}
I_2&\geq -\int_{0}^{+\infty}(a_{\phi_{0}}^{-1}(s)-\min \phi_0)\left(f_0^\sharp(s)-f^\sharp(s)\right)_+  ds-3\|\phi_f\|_{L^\infty}\|f^\sharp-f_0^\sharp\|_{L^1}.
\end{align*}
Using \fref{encadrement}, we have
\begin{align*}
I_2&\geq -\int_{0}^{+\infty}s^2\left(f_0^\sharp(s)-f^\sharp(s)\right)_+ds- (2m_0+3\|\phi_f\|_{L^\infty})\|f^\sharp-f_0^\sharp\|_{L^1}.
\end{align*}

We now conclude by observing that, for all $\theta\in \TT$, we have
\begin{equation}
\label{phiLinfini}
|\phi_f(\theta)|\leq |M_f|\,|u(\theta)|=|M_f|\leq \|f\|_{L^1}.
\end{equation}
\end{proof}

\section{Study of the functional $\mathcal J$.}
\label{section3}
In this section, we study the function $\mathcal J(m)$ defined for $m\in\RR_+$ by \eqref{defJ}, with $\phi(\theta)=-m\cos \theta$. For $e\in \RR$ and $m\in \RR_+^*$, we recall that 
$$a_\phi(e)=\alpha_{m}(e)=\sqrt{m}\,\alpha_1\left(\frac{e}{m}\right),$$
where $\alpha_1$ was defined by \eqref{alpha1}. Clearly, \eqref{defJ} and \eqref{dang} yield, for $m>0$,
\begin{align*}
\calJ(m)&=\frac{m^2}{2}+\int_{-\infty}^{+\infty} \int^{2 \pi}_{0}\left(\frac{v^2}{2}-m\cos \theta \right)f_0^\sharp \circ \alpha_m \left(\frac{v^2}{2}-m\cos \theta \right)d\theta dv\\
&=\frac{m^2}{2}+m\int_{0}^{+\infty}f^\sharp_0(s) \,\alpha_1^{-1}\left(\frac{s}{\sqrt{m}}\right) ds.
\end{align*}
\begin{proposition}
\label{propJ}
The function $\mathcal J$ defined by \eqref{defJ} is a $\mathcal C^2$ function on $\RR_+$. Denoting $\phi(\theta)=-m\cos \theta$, we have
\begin{equation}
\label{Jprime}
\mathcal J'(m)=m-\iint f_0^{*\phi}(\theta,v)\cos \theta\,d\theta dv
\end{equation}
and
\begin{equation}
\label{Jseconde}
\mathcal J''(m)=1+\iint(f^\sharp_0\circ a_\phi)'(e(\theta,v))\left(\cos \theta -\frac{\displaystyle\int_0^{\theta_m(e(\theta,v))}\cos \theta'(e(\theta,v)+m \cos \theta')^{-1/2}\,d\theta'}{\displaystyle\int_0^{\theta_m(e(\theta,v))}(e(\theta,v)+m \cos \theta')^{-1/2}\,d\theta'}\right)^2\,d\theta dv,
\end{equation}
where $e(\theta,v)=\frac{v^2}{2}+\phi(\theta)$ and $\theta_m$ is defined by \eqref{defthetam}.
\end{proposition}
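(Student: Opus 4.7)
The plan is to differentiate twice the formula $\mathcal{J}(m) = m^2/2 + \iint e\,f_0^{*\phi}\,d\theta dv$ with $e = e(\theta,v;m) = v^2/2 - m\cos\theta$ and $f_0^{*\phi}(\theta,v) = f_0^\sharp(\alpha_m(e(\theta,v;m)))$, treating both the explicit and implicit $m$-dependence by the chain rule. The two tools needed are the explicit partial derivatives of $\alpha_m$, provided by (the proof of) Lemma \ref{lemtechnique}, namely
\[
\alpha_m'(e) = 2\sqrt{2}\,Q(e,m),\qquad \partial_m \alpha_m(e) = 2\sqrt{2}\,P(e,m),
\]
where
\[
P(e,m) = \int_0^{\theta_m(e)}\!\!\cos\theta'\,(e+m\cos\theta')^{-1/2}\,d\theta', \quad Q(e,m) = \int_0^{\theta_m(e)}\!(e+m\cos\theta')^{-1/2}\,d\theta';
\]
and the family of co-area identities, valid for any function $G$ of $e$ alone,
\[
\iint G(e)\,d\theta dv = 2\sqrt{2}\!\int\! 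GQ\,de,\quad \iint G(e)\cos\theta\,d\theta dv = 2\sqrt{2}\!\int\! GP\,de,\quad \iint G(e)\cos^2\theta\,d\theta dv = 2\sqrt{2}\!\int\! GR\,de,
\]
with $R(e,m) = \int_0^{\theta_m(e)}\cos^2\theta'(e+m\cos\theta')^{-1/2}\,d\theta'$. Each identity follows from the substitution $e = v^2/2 + \phi(\theta)$, interchanging the order of $\theta$ and $e$ integration, and using the symmetry $\theta \leftrightarrow 2\pi - \theta$.

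The chain rule then produces $\partial_m f_0^{*\phi} = 2\sqrt{2}\,(f_0^\sharp)'(\alpha_m(e))(P - Q\cos\theta)$. Differentiating $\iint e\,f_0^{*\phi}\,d\theta dv$ yields the explicit piece $\iint(-\cos\theta)f_0^{*\phi}\,d\theta dv$ together with the implicit piece $2\sqrt{2}\iint e\,(f_0^\sharp)'(\alpha_m)(P - Q\cos\theta)\,d\theta dv$; applying the co-area identities to each of the two terms in this implicit piece produces $8\int e(f_0^\sharp)'(\alpha_m)\,(PQ - QP)\,de = 0$, the crucial cancellation which yields \eqref{Jprime}. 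Differentiating \eqref{Jprime} again and using that $\cos\theta$ is $m$-independent,
\[
\mathcal{J}''(m) = 1 - 2\sqrt{2}\iint (f_0^\sharp)'(\alpha_m(e))(P\cos\theta - Q\cos^2\theta)\,d\theta dv = 1 + 8\int (f_0^\sharp)'(\alpha_m(e))(QR - P^2)\,de,
\]
the second equality from the co-area formulas applied respectively to $\cos\theta$ and $\cos^2\theta$. Finally, to identify this with the variance form of \eqref{Jseconde}, expand the square in $\iint (f_0^\sharp\circ\alpha_m)'(e)(\cos\theta - P/Q)^2\,d\theta dv$; using $(f_0^\sharp\circ\alpha_m)'(e) = 2\sqrt{2}\,Q\,(f_0^\sharp)'(\alpha_m(e))$ and applying the three co-area identities term by term returns exactly $8\int (f_0^\sharp)'(\alpha_m)(QR - P^2)\,de$, completing the identification.

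The main obstacle is the rigorous justification of the two differentiations under the integral. The $\theta$-integrands contain $(e+m\cos\theta')^{-1/2}$, which is integrable near the turning angle $\theta_m(e)$ (the singularity is of inverse-square-root type, hence harmless). As $e\to m$ the function $\alpha_m'$ diverges logarithmically by Lemma \ref{lemtechnique}(iv); in the delicate case $e_* = m_0$ of Assumption \ref{hypF}, the hypothesis $\int_{-m_0}^{m_0}\log(m_0 - e)F'(e)\,de < +\infty$ is precisely what yields an integrable dominating function uniformly in $m$ on compact subintervals of $\RR_+^*$. The tail $e\to +\infty$ is controlled by the finite-energy assumption $f_0 \in L^1((1+v^2)\,d\theta dv)$, equivalent to $\int_0^{+\infty} s^2 f_0^\sharp(s)\,ds < +\infty$, which dominates the large-$|v|$ contributions. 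Finally, no Leibniz boundary term at $e = -m$ arises, since $\alpha_m(-m) = P(-m,m) = Q(-m,m) = 0$ and $\theta_m(-m) = 0$. Once these uniform dominations are in place, the computations above give both the $\mathcal{C}^2$ regularity of $\mathcal{J}$ on $\RR_+$ and the announced formulas.
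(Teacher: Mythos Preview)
Your computation is correct and yields the same formulas, but the route differs from the paper's. The paper first passes to the rearrangement variable via Lemma~\ref{lemchangevar}, writing $\calJ(m)=\frac{m^2}{2}+m\int_0^\infty f_0^\sharp(s)\,\alpha_1^{-1}(s/\sqrt m)\,ds$, and differentiates under the $s$-integral; it then introduces the auxiliary function $b_\phi$ (the cosine-weighted analogue of $a_\phi$) and uses the algebraic identity $e\,a_\phi'(e)+m\,b_\phi'(e)=\tfrac12 a_\phi(e)$ to reach \eqref{Jprime}. Before the second differentiation the paper integrates by parts in $e$ to replace $f_0^\sharp$ by $(f_0^\sharp)'$, and a second algebraic identity then produces the variance form \eqref{Jseconde}. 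Your approach instead stays in $(\theta,v)$-coordinates throughout and obtains \eqref{Jprime} from the direct cancellation $PQ-QP=0$ after applying the co-area identities, bypassing $b_\phi$, the integration by parts, and both algebraic identities.

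What each approach buys: your cancellation argument is shorter and conceptually transparent, and it avoids having to check the boundary term at infinity in the integration by parts. On the other hand, the paper's $s$-variable formulation makes the domination arguments more explicit: the first differentiation uses only $f_0^\sharp\in L^1((1+s^2)\,ds)$ (not its derivative), with bounds coming straight from \eqref{encadrement} and \eqref{minoraphi'}, whereas your first differentiation already requires controlling $(f_0^\sharp)'(\alpha_m(e))\,Q(e,m)$ uniformly in $m$ near the logarithmic singularity $e=m$. Your sketch of the domination is on the right track but would need to be fleshed out (in particular, the singularity at $e=m$ is not at $e_*$ in general, so the special hypothesis of Assumption~\ref{hypF} is not what handles it for $m\neq m_0$; rather one uses that $(f_0^\sharp)'(\alpha_m(e))$ stays bounded while $Q$ is only logarithmically singular, hence locally integrable in $(\theta,v)$).
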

{}From this Proposition and from \eqref{fixedpoint}, it is immediate to deduce the
\begin{corollary}
\label{coroJ}
Under Assumption \ref{criterion}, the magnetization $m_0$ of the stationary state $f_0$ is a strict local minimizer of $\mathcal J$: one has
$$\mathcal J'(m_0)=0\qquad \mbox{and}\qquad \mathcal J''(m_0)=1-\kappa_0>0.$$
\end{corollary}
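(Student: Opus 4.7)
The plan is to apply Proposition \ref{propJ} at $m=m_0$ and recognize the resulting expressions with the help of the fixed-point identity \eqref{fixedpoint}, i.e.\ $f_0 = f_0^{*\phi_0}$ and equivalently $f_0^{\sharp}\circ a_{\phi_0}=F$ on $[\min\phi_0,+\infty)=[-m_0,+\infty)$.

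First, I compute $\mathcal{J}'(m_0)$. Substituting $m=m_0$ in \eqref{Jprime}, the identity \eqref{fixedpoint} replaces $f_0^{*\phi_0}$ by $f_0$, so
\begin{equation*}
\mathcal{J}'(m_0)=m_0-\iint f_0(\theta,v)\cos\theta\,d\theta dv = m_0-\int \rho_{f_0}(\theta)\cos\theta\,d\theta.
\end{equation*}
The last integral is precisely the first component of $M_{f_0}=(m_0,0)^T$, hence $\mathcal{J}'(m_0)=0$.

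Next, for $\mathcal{J}''(m_0)$, I substitute $m=m_0$ in \eqref{Jseconde}. By the chain rule and the fixed-point identity, $(f_0^{\sharp}\circ a_{\phi_0})'(e)=F'(e)$, which replaces the derivative factor by $F'(e_0(\theta,v))$. It remains to match the bracketed term with the integrand of $\kappa_0$. The set $\mathcal D=\{\theta'\in\TT:\phi_0(\theta')<e_0(\theta,v)\}=\{\theta':\cos\theta'>-e_0(\theta,v)/m_0\}$ is symmetric about $\theta'=0$ and, by definition \eqref{defthetam}, equals (up to a null set) the interval $(-\theta_{m_0}(e_0),\theta_{m_0}(e_0))$ when $-m_0<e_0<m_0$, and equals $\TT$ when $e_0\geq m_0$ (with $\theta_{m_0}(e_0)=\pi$). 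By the parity in $\theta'$ of the integrands, $\int_{\mathcal D}\cdot\,d\theta'=2\int_0^{\theta_{m_0}(e_0)}\cdot\,d\theta'$, and since $\cos\theta$ is constant with respect to $\theta'$,
\begin{equation*}
\frac{\int_{\mathcal D}(\cos\theta-\cos\theta')(e_0-\phi_0(\theta'))^{-1/2}d\theta'}{\int_{\mathcal D}(e_0-\phi_0(\theta'))^{-1/2}d\theta'}=\cos\theta-\frac{\int_0^{\theta_{m_0}(e_0)}\cos\theta'(e_0+m_0\cos\theta')^{-1/2}d\theta'}{\int_0^{\theta_{m_0}(e_0)}(e_0+m_0\cos\theta')^{-1/2}d\theta'},
\end{equation*}
which is exactly the bracket appearing in \eqref{Jseconde} at $m=m_0$. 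Since $F'<0$ on $(-\infty,e_*)$ (and $F'=0$ beyond), we may write $F'=-|F'|$, and comparing with definition \eqref{kappa0} we conclude $\mathcal{J}''(m_0)=1-\kappa_0$, which is strictly positive by Assumption \ref{criterion}.

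Combined with $\mathcal{J}'(m_0)=0$ and the $\mathcal C^2$ regularity from Proposition \ref{propJ}, this shows that $m_0$ is a strict local minimizer of $\mathcal J$. I do not expect a genuine obstacle here, as all the analytic work (smoothness of $\mathcal J$, the explicit formulas for $\mathcal J'$ and $\mathcal J''$) has already been carried out in Proposition \ref{propJ}; the only care needed is the elementary parity/domain identification tying $\mathcal D$ to $(0,\theta_{m_0}(e_0))$.
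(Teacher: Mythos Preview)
Your proof is correct and follows exactly the line the paper intends: the authors state that the corollary is immediate from Proposition~\ref{propJ} together with the fixed-point identity \eqref{fixedpoint}, and you have simply written out those immediate verifications (including the parity identification of $\mathcal D$ with $(-\theta_{m_0}(e_0),\theta_{m_0}(e_0))$, which is the only detail requiring a moment's thought).
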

\begin{proof}[Proof of Proposition \ref{propJ}]
To differentiate the function $\mathcal J(m)$, we denote
$$g(m,s)=f^\sharp_0(s) \,\alpha_1^{-1}\left(\frac{s}{\sqrt{m}}\right).$$
{}From Lemma \ref{lemtechnique}, $g$ is continuously differentiable with respect to $m\in \RR_+^*$, with
$$\frac{\pa g}{\pa m}(m,s)=-\frac{sf^\sharp_0(s)}{2m^{3/2}\,\alpha_1'\circ \alpha_1^{-1}(\frac{s}{\sqrt{m}})}.$$
Moreover, we can also easily deduce from Lemma \ref{lemtechnique} that there exists a constant $C>0$ such that
\begin{equation}
\label{minoraphi'}
\sqrt{2+e}\,\alpha_1'(e)\geq C, \qquad \forall e\geq -1.
\end{equation}
Let us fix $0<m_1<m_2$. We deduce from \eqref{minoraphi'} that, for all $(m,s)\in [m_1,m_2]\times \RR_+$,
$$\left|\frac{\pa g}{\pa m}(m,s)\right|\lesssim sf^\sharp_0(s)\left(2+\alpha_1^{-1}\left(\frac{s}{\sqrt{m}}\right)\right)^{1/2},$$
where $f\lesssim g$ means $f\leq Cg$ for some constant $C$. Next, using \eqref{encadrement}, we obtain
$$\left|\frac{\pa g}{\pa m}(m,s)\right|\lesssim (1+s^2) f^\sharp_0(s).$$
Now, we claim that
\begin{equation}
\label{fsharpenergy}
\int_0^{+\infty}(1+s^2) f^\sharp_0(s)ds<+\infty.
\end{equation}
Indeed, we already know that $\int f^\sharp_0(s)ds=\|f_0\|_{L^1}<+\infty$ and, by \eqref{encadrement} and \eqref{dang},
\begin{align*}
\int_0^{+\infty}s^2f^\sharp_0(s)ds
&\lesssim \int_0^{+\infty}\left(1+a_{\phi_0}^{-1}(s)\right)f^\sharp_0(s)ds\\
&\qquad =\iint\left(1+\frac{v^2}{2}+\phi_0(\theta) \right)f_0^{*\phi_0}(\theta,v) d\theta dv\\
&\lesssim \iint\left(1+v^2+\|f_0\|_{L^1}\right)f_0(\theta,v) d\theta dv<+\infty,
\end{align*}
where we used \eqref{fixedpoint}, \eqref{phiLinfini} and Assumption \ref{hypF}. This proves \eqref{fsharpenergy} and, by dominated convergence, one can continuously differentiate $\mathcal J(m)=\frac{m^2}{2}+m\int_0^{+\infty}g(m,s)ds$ for all $m>0$:
\begin{align*}
\mathcal J'(m)&=m+\int_0^{+\infty}g(m,s)ds+m\int_0^{+\infty}\frac{\pa g}{\pa m}(m,s)ds\\
&=m+\int_0^{+\infty}f^\sharp_0(s)\left(\alpha_1^{-1}\left(\frac{s}{\sqrt{m}}\right)-\frac{s}{2\sqrt{m}\,\alpha_1'\circ \alpha_1^{-1}(\frac{s}{\sqrt{m}})}\right)ds\\
&=m+\frac{1}{m}\int_0^{+\infty}f^\sharp_0(s)\left(a_\phi^{-1}(s)-\frac{s}{2 a_\phi'\circ a_\phi^{-1}(s)}\right)ds\\
&=m+\frac{1}{m}\int_{-m}^{+\infty}f^\sharp_0\circ a_\phi(e)\left(ea_\phi'(e)-\frac12 a_\phi(e)\right)de.
\end{align*}
We now introduce the function
$$\beta_1(e)=2\sqrt{2}\int_0^{2\pi}\cos \theta\sqrt{(e+\cos \theta)_+}\,d\theta \qquad \mbox{for }e\in \RR$$
and denote 
\begin{equation}
\label{defbphi}
b_\phi(e)=2\sqrt{2}\int_0^{2\pi}\cos \theta\sqrt{(e+m \cos \theta)_+}\,d\theta=\sqrt{m}\,\beta_1\left(\frac{e}{m}\right).
\end{equation}
Let us list a few properties of this function $b_\phi$. By adapting the proof of Lemma \ref{lemtechnique} developed in the Appendix, it is readily seen that $b_\phi$ is a continuous function on $\RR$, vanishing for $e\leq -m$, continuously differentiable on $[-m,m)\cup(m,+\infty)$ with
$$b_\phi'(e)=2\sqrt{2}\int_0^{\theta_m(e)}\cos \theta(e+m\cos \theta)^{-1/2}\,d\theta.$$
Moreover, we have
\beq
b_\phi(e)=4\sqrt{2}\int_0^{\pi/2}\cos\theta\left(\sqrt{(e+m \cos \theta)_+}-\sqrt{(e-m \cos \theta)_+}\right)\,d\theta
\nonumber\\
\label{defbphitilde}
\eeq
which implies that $b_\phi(e)$ is always positive for $e>-m$, $m>0$.  
For $e>m$ we then have
\beq
b_\phi(e)=8m\sqrt{2}\int_0^{\pi/2}\frac{(\cos\theta)^2}{\sqrt{e+m \cos \theta}+\sqrt{(e-m \cos \theta)_+}}\,d\theta,
\eeq
\begin{equation}
\label{equivb}
b_\phi(e)\sim \frac{\pi m\sqrt{2}}{\sqrt{e}}\qquad \mbox{as }e\to +\infty.
\end{equation}
Similarly, for $e>m$ we have
$$
b_\phi'(e)=- 4m\sqrt{2}\int_0^{\pi/2}\frac{(\cos\theta)^2}{\sqrt{e+m \cos \theta}\sqrt{e-m \cos \theta}(\sqrt{e+m \cos \theta}+\sqrt{e-m \cos \theta}}\,d\theta,
$$
thus
\begin{equation}
\label{equivb'}
 b_\phi'(e)\sim -\frac{\pi m }{e\sqrt{2e}}\qquad \mbox{as }e\to +\infty.
\end{equation}
Now we observe that
\begin{equation}
\label{lien ab}ea_\phi'(e)+mb_\phi'(e)=\frac12 a_\phi(e).
\end{equation}
Hence, for $m>0$, we have
\begin{align}
\label{J'}
\mathcal J'(m)&=m-\int_{-m}^{+\infty}f^\sharp_0\circ a_\phi(e)b_\phi'(e)\, de\\
&=m-2\sqrt{2}\int_{-m}^{+\infty}\int_0^{\theta_m(e)}f^\sharp_0\circ a_\phi(e)\frac{\cos \theta}{\sqrt{e+m\cos \theta}}\,d\theta de.\nonumber
\end{align}
By passing to the limit in this formula, we also get that $\mathcal J$ is differentiable at $m=0$, with $\mathcal J'(0)=0$. Finally, coming back to the variables $(\theta,v)$, we obtain \eqref{Jprime}.

\bs
In order to compute the second derivative of $\mathcal J$, let us transform this expression into a more suitable one, using an integration by parts in $e$. We denote $\widetilde e_*=a_{\phi}^{-1}\circ a_{\phi_0}(e_*)$, where $e_*$ is defined in Assumption \ref{hypF}. By \eqref{fixedpoint}, we have $f^\sharp_0\circ a_\phi=F\circ a_{\phi_0}^{-1}\circ a_\phi$, this function being continuous on $[-m,+\infty)$, of class $\mathcal C^1$ on $[-m,+\infty)\setminus\{m,\widetilde e_*\}$, nonincreasing, and vanishes on $[\widetilde e_*,+\infty)$. Therefore, in the case $e_*<+\infty$, one can directly integrate by parts to obtain
\begin{equation}
\label{IPP}\int_{-m}^{+\infty}f^\sharp_0\circ a_\phi(e)b_\phi'(e)\,de=-\int_{-m}^{+\infty}(f^\sharp_0)'\circ a_\phi(e)a_\phi'(e)b_\phi(e)\,de.
\end{equation} 

Now we deal with the case $\widetilde e_*=e_*=+\infty$. Since $f^\sharp$ is a nonincreasing function on $\RR^+$ and belongs to $L^1(\RR^+)$, we deduce that $f^\sharp (s)\to 0$ when $e\to +\infty$.  Therefore, according to \eqref{equivb}, we have $f^\sharp_0\circ a_\phi(e)b_\phi (e)\to 0$ when $e\to +\infty$, and  the integration by parts giving \eqref{IPP} is also valid in the case $e_*=+\infty$.

Consequently, we have
\begin{align*}
\mathcal J'(m)&=m+\int_{-m}^{+\infty}(f^\sharp_0)'\circ a_\phi(e)a_\phi'(e)b_\phi(e) de\\
&=m+\int_0^{+\infty}(f^\sharp_0)'(s)b_\phi\circ a_\phi^{-1}(s) ds\\
&=m+\sqrt{m}\int_0^{+\infty}(f^\sharp_0)'(s)\,\beta_1\circ \alpha_1^{-1}\left(\frac{s}{\sqrt{m}}\right)\,ds.
\end{align*}
Consider the function
$$h(m,s)=(f^\sharp_0)'(s) \,\beta_1\circ \alpha_1^{-1}\left(\frac{s}{\sqrt{m}}\right).$$
Using again Lemma \ref{lemtechnique}, we get that $h$ is continuously differentiable with respect to $m\in \RR_+^*$ for all $m\in \RR_+^*\backslash\{s^2/32\}$, with
$$\frac{\pa h}{\pa m}(m,s)=-\frac{s(f^\sharp_0)'(s)}{2m^{3/2}\,\alpha_1'\circ \alpha_1^{-1}(\frac{s}{\sqrt{m}})}\,\beta_1'\circ\alpha_1^{-1}\left(\frac{s}{\sqrt{m}}\right).$$
Since $|b_\phi'(e)|\leq a_\phi'(e)$,  we deduce that
$$\left|\frac{\pa h}{\pa m}(m,s)\right|\lesssim -s(f^\sharp_0)'(s), \quad \mbox{for all} \ m\in[m_1,m_2], \ 0<m_1<m_2.$$
We now claim that 
\beq
\label{sfsharp}
\mbox{the fonction}\ s\mapsto s(f_0^\sharp)'(s)\ \mbox{belongs to}\  L^1(\RR_+).
\eeq
Indeed, since $f_0^\sharp$ is decreasing, we have
$$\int_0^{r} s^2 f^\sharp_0(s) ds\geq  f^\sharp_0(r)\int_0^{r} s^2ds= \frac{r^3}{3}  f_0^\sharp(r).$$
Hence, using \fref{fsharpenergy}, we get
$$ f^\sharp_0(s)\lesssim \frac{1}{s^3}, \quad \forall s>0.$$
In particular $s f^\sharp_0(s)\to 0$ when $s\to +\infty$.
On the other hand, the function $f^\sharp_0=F\circ a_{\phi_0}^{-1}$ is continuous on $\RR_+$, of class $\mathcal C^1$ and decreasing on $[0,a_{\phi_0}(e_*))$, vanishing on $[a_{\phi_0}(e_*),+\infty)$ (with possibly $a_{\phi_0}(e_*)=+\infty$). Therefore we can perform the following integration by parts
$$- \int_0^{+\infty}  s(f_0^\sharp)'(s)ds= \int_0^{+\infty} f_0^\sharp(s)ds <+\infty.$$
This ends the proof of claim \fref{sfsharp} and 
enables to conclude by dominated convergence that $\mathcal J'$ is continuously differentiable on $\RR_+$ and that
\begin{align*}
&\mathcal J''(m)=1+\frac{1}{2\sqrt{m}}\int_0^{+\infty}h(m,s)ds+\sqrt{m}\int_0^{+\infty}\frac{\pa h}{\pa m}(m,s)ds\\
&\qquad =1+\int_0^{+\infty}(f^\sharp_0)'(s)\left(\frac{1}{2\sqrt{m}}\beta_1\circ \alpha_1^{-1}\left(\frac{s}{\sqrt{m}}\right)-\frac{s\beta_1'\circ\alpha_1^{-1}\left(\frac{s}{\sqrt{m}}\right)}{2m\,\alpha_1'\circ \alpha_1^{-1}(\frac{s}{\sqrt{m}})}\right)ds\\
&\qquad =1+\frac{1}{2m}\int_{-m}^{+\infty}\frac{(f^\sharp_0\circ a_\phi)'(e)}{a_\phi'(e)}\left(a_\phi'(e)b_\phi(e)-a_\phi(e)b_\phi'(e)\right)de.
\end{align*}
Finally, observing from \eqref{lien ab} and from
$$eb_\phi'(e)+2m\sqrt{2}\int_0^{\theta_m(e)}(\cos\theta)^2(e+m \cos \theta)^{-1/2}\,d\theta=\frac12 b_\phi(e)$$
that
\begin{align*}
&\frac{1}{2ma_\phi'(e)}\left(a_\phi'(e)b_\phi(e)-a_\phi(e)b_\phi'(e)\right)\\
&\qquad=-\frac{(b_\phi'(e))^2}{a_\phi'(e)}+2\sqrt{2}\int_0^{\theta_m(e)}(\cos\theta)^2(e+m \cos \theta)^{-1/2}\,d\theta\\
&\qquad =2\sqrt{2}\int_0^{\theta_m(e)}\left(\cos \theta -\frac{\ds\int_0^{\theta_m(e)}\cos \theta'(e+m \cos \theta')^{-1/2}\,d\theta'}{\ds \int_0^{\theta_m(e)}(e+m \cos \theta')^{-1/2}\,d\theta'}\right)^2(e+m \cos \theta)^{-1/2}\,d\theta,
\end{align*}
we obtain \eqref{Jseconde} by coming back to the $(\theta,v)$ variables.
\end{proof}

\section{Control of $f$}
\label{section4}

Our previous analysis has allowed the control of the magnetization vector by the relative Hamiltonian and 
the relative rearrangements. It remains to control the whole distribution function $f$. To this aim we now 
write the relative energy in the following form:

\begin{equation}
\label{eq1234}
\ham(f)-\ham(f_0)= \iint\left(\frac{v^2}{2}+\phi_{f_0}\right)\left(f-f_0\right) d\theta dv-\frac{1}{2}|M_f-M_{f_0}|^2.
\end{equation}
In particular, this means that  the following quantity 
$$ \iint\left(\frac{v^2}{2}+\phi_{f_0}\right)\left(f-f_0\right) d\theta dv$$
is controlled  and the  problem is to show how this quantity controls $f-f_0$.  This task was achieved in the context of the gravitational Vlasov-Poisson system \cite{ML} using compactness arguments.  Here we will rather use a functional inequality established in \cite{Lq} to get a quantitative control of $\|f-f_0\|_{L^1}$ by this quantity, up to rearrangement terms depending only on $f^*$ and $f_0^*$ which are preserved by the flow.  We emphasize that the steady states to Vlasov-Poisson system studied in  \cite{ML}  are compactly supported and this property was essential to  succcessfully drive the stability analysis in this context. Here this assumption is not needed and a much weaker assumption is made in the case of the HMF model. More precisely, we have the following inequality:

\begin{proposition}
\label{prop-ineq-quant}
Let $f_0$ be given by \fref{Qexp} where $F$ satisfies Assumption \ref{hypF}. Then, there exist a constant $K_0$  depending only on $f_0$ such that, for all $f\in L^1((1+|v|^2) dv d\theta)$ we have
\begin{align}
\label{ineq-quant}
\left(\| f- f_0\|_{L^1} + \|f_0\|_{L^1} -\|f\|_{L^1}\right)^2\leq & K_0\iint\left(\frac{v^2}{2}+\phi_{f_0}\right)\left(f-f_0\right) d\theta dv \nonumber \\ &
\hspace{-1cm}+ m_0\|f^*-f_0^*\|_{L^1} + \frac{1}{8\pi^2} \int_0^{+\infty}\mu_0(s)^2\beta_{f^*,f_0^*}(s) ds,
 \end{align}
 where  $\beta_{f^*,f_0^*}(s) =\left\thickvert\left\{(\theta,v) \in \TT\times \Real: f^*(\theta,v)\leq s<f_0^*(\theta, v)\right\}\right\thickvert, \mbox{for all}\ s\geq 0.$
\end{proposition}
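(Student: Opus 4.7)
The strategy is to exploit the rearrangement $f^{*\phi_0}$ of $f$ with respect to the microscopic energy $e_0(\theta,v)=\frac{v^2}{2}+\phi_0(\theta)$, together with the fixed-point identity $f_0=f_0^{*\phi_0}$ from \eqref{fixedpoint}. I would begin by splitting
\begin{equation*}
\iint e_0(f-f_0)\,d\theta dv = I_1 + I_2,
\end{equation*}
with $I_1:=\iint e_0(f - f^{*\phi_0})\,d\theta dv$ and $I_2:=\iint e_0(f^{*\phi_0} - f_0^{*\phi_0})\,d\theta dv$. The first term is nonnegative by the same layer-cake argument as in Step 2 of the proof of Proposition \ref{prop1}. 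Lemma \ref{lemchangevar} rewrites the second as
\begin{equation*}
I_2 = \int_0^{+\infty}(f^\sharp(s)-f_0^\sharp(s))\,a_{\phi_0}^{-1}(s)\,ds,
\end{equation*}
which I would estimate by splitting the bound \eqref{encadrement} into its quadratic $s^2/(32\pi^2)$ and bounded $\pm m_0$ pieces.

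Second, I would observe that the left-hand side of \eqref{ineq-quant} is exactly $\bigl(2\int(f_0-f)_+\,d\theta dv\bigr)^2$, the squared total ``mass hole'' of $f$ beneath $f_0$. The heart of the argument is a quantitative bathtub-type functional inequality, borrowed from \cite{Lq}, which converts $I_1$ into a quadratic control of this hole. The key geometric input is that since $f_0=F(e_0)$ with $F$ strictly decreasing, $\{f_0>s\}=\{e_0<F^{-1}(s)\}$ is precisely the sublevel set of $e_0$ of measure $\mu_{f_0}(s)$, so
\begin{equation*}
\int_{\{f_0 > s\}} e_0\,d\theta dv = \int_0^{\mu_{f_0}(s)} a_{\phi_0}^{-1}(t)\,dt = \min_{|E|=\mu_{f_0}(s)}\int_E e_0 \,d\theta dv.
\end{equation*}
Combined with the layer-cake identity $f-f_0 = \int_0^{+\infty}(\mathbf{1}_{\{f>s\}}-\mathbf{1}_{\{f_0>s\}})\,ds$, this re-expresses $\iint e_0(f-f_0)$ as an integral over $s$ of level-wise rearrangement deficits, each bounded below via \eqref{encadrement}. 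A Cauchy-Schwarz step pairing these deficits with the identity $\int(f_0-f)_+ = \int_0^{+\infty} |\{f\leq s<f_0\}|\,ds$ then produces the squared $L^1$ norm on the left and the correction $\frac{1}{8\pi^2}\int \mu_{f_0}(s)^2\,\beta_{f^*,f_0^*}(s)\,ds$ on the right; the constant $\frac{1}{8\pi^2}=\frac{4}{32\pi^2}$ reflects the factor $2$ in $2\int(f_0-f)_+$ squared together with the leading coefficient $\frac{1}{32\pi^2}$ of $a_{\phi_0}^{-1}(s)$. The bounded $\pm m_0$ remainder in \eqref{encadrement}, applied to both $I_1$ and $I_2$, produces the $m_0\|f^*-f_0^*\|_{L^1}$ term.

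The main obstacle is to make this bathtub inequality quantitative uniformly in the non-compactly supported case $e_*=+\infty$. There $\mu_{f_0}(s)$ blows up as $s\to 0^+$, so the $\mu_{f_0}^2$-weighted correction is genuinely needed and a plain $L^1$ bound on $f^*-f_0^*$ is insufficient. That this weight is nevertheless integrable against $\beta_{f^*,f_0^*}$ rests on the bound $\int_0^{+\infty}\mu_{f_0}(s)^3\,ds<+\infty$ derived from \eqref{fsharpenergy} in the proof of orbital stability, which in turn uses Assumption \ref{hypF}. In the compactly supported case the weight $\mu_{f_0}$ is bounded and the argument reduces to a direct layer-cake calculation, but the whole proof must be carried out so that the quadratic-weight correction is honest and does not hide a diverging constant in the non-compact regime.
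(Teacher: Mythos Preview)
Your sketch borrows the right external ingredient (the quantitative rearrangement inequality of \cite{Lq}) but misplaces where it acts and, more seriously, overlooks the actual technical content of the proof.

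First, the paper does \emph{not} decompose $\iint e_0(f-f_0)$ as $I_1+I_2$ here; that decomposition belongs to Proposition \ref{prop1}. Instead, Theorem~1 of \cite{Lq} is applied directly to the full integral $\iint e_0(f-f_0)\,d\theta dv$ and already outputs an inequality of the form \eqref{ineq-quant}, with an explicit constant
\[
K(f_0)=4\int_0^{\|f_0\|_{L^\infty}}\frac{ds}{H_0(\mu_{f_0}(s))},\qquad
H_0(\mu)=\inf_{0<s\le\mu}\frac{B_0(\mu+s)+B_0(\mu-s)-2B_0(\mu)}{s^2},
\]
where $B_0(\mu)=\int_0^\mu a_{\phi_0}^{-1}$. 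Your proposed route through $I_1=\iint e_0(f-f^{*\phi_0})$ is problematic because $I_1$ measures the deficit of $f$ relative to its \emph{own} rearrangement $f^{*\phi_0}$, not relative to $f_0$; a quadratic lower bound on $I_1$ would control $\|f-f^{*\phi_0}\|_{L^1}$, which is not the left-hand side of \eqref{ineq-quant}. Passing from one to the other is precisely what the \cite{Lq} machinery packages, and it is not achieved by a separate Cauchy--Schwarz step on level sets as you suggest.

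Second, and this is the real gap, you never address why $K(f_0)$ is finite, which is the bulk of the paper's proof. Two specific inputs are needed: (i) the concavity of $a_{\phi_0}^{-1}$ on $[0,16\sqrt{m_0}]$ and its convexity on $[16\sqrt{m_0},+\infty)$ (Lemma \ref{lemtechnique}) give the lower bound $H_0(\mu)\ge \frac12(a_{\phi_0}^{-1})'(\mu)$; (ii) after the change of variable $e=F^{-1}(t)$ one must show $\int_{-m_0}^{e_*} a_{\phi_0}'(e)|F'(e)|\,de<\infty$, and this is exactly where the logarithmic blow-up $a_{\phi_0}'(e)\sim -C\log|e-m_0|$ near the separatrix and the special integrability clause $\int_{-m_0}^{m_0}\log(m_0-e)F'(e)\,de<\infty$ in Assumption \ref{hypF} are used. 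Your ``main obstacle'' paragraph instead discusses the integrability of $\int\mu_{f_0}(s)^2\beta_{f^*,f_0^*}(s)\,ds$ via $\int\mu_{f_0}^3<\infty$; but that argument belongs to the orbital-stability proof in Section~1.3 (Case~2), not to this proposition. Here the $\mu_{f_0}^2\beta$ term is simply a remainder that falls out of the \cite{Lq} inequality after applying the bound $a_{\phi_0}^{-1}(2\mu_{f_0}(s))\le \mu_{f_0}(s)^2/(8\pi^2)+m_0$ from \eqref{encadrement}, and its finiteness is neither asserted nor needed in Proposition \ref{prop-ineq-quant}.
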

\begin{proof}
We shall apply Theorem 1 in \cite{Lq}. We use the rearrangement with respect to $e_0(\theta,v)= \frac{v^2}{2}+\phi_{f_0}$ and  recall that the function $a_{\phi_0}$ is strictly increasing and a one-to-one  function  from $[\min \phi_0, +\infty)$
to $[0, +\infty)$. Following \cite{Lq}, we introduce the functions 
\begin{equation}
B_0(\mu)= \int_0^{\mu} a_{\phi_0}^{-1}(s) ds, \quad \forall  \mu\geq 0,
\end{equation}
and
$$
 H_0(\mu) = \inf_{0< s\leq \mu}   \frac{B_0(\mu+s)+B_0(\mu-s)-2B_0(\mu)}{s^2}.
$$
Then from Theorem 1 in \cite{Lq}, we have
\begin{align}\left(\| f- f_0\|_{L^1} + \|f_0\|_{L^1} -\|f\|_{L^1}\right)^2\leq & K(f_0) \iint\left(\frac{v^2}{2}+\phi_{f_0}\right)\left(f-f_0\right) d\theta dv \nonumber  \\
&\hspace{-3cm}+  \int_0^{+\infty}  a_{\phi_0}^{-1} (2\mu_{f_0}(s))\beta_{f^*,f_0^*}(s)ds - \int_0^{+\infty}a_{\phi_0}^{-1} (\mu_{f_0}(s))\beta_{f^*_0,f^*}(s)ds \label{inequ123} 
\end{align}
where
\beq
\label{Kf0} K(f_0)= 4 \int_0^{\|f_0\|_{L^\infty}} \frac{ds}{H_0(\mu_{f_0}(s))}\ , \ \mbox{and} 
\eeq
\beq
\label{nut}
\beta_{f,g}(s)= \mbox{meas}\{(\theta,v)\in \TT \times \RR; f(\theta, v)\leq s<g(\theta,v)     \}, \quad \forall s\geq 0.
\eeq
Using the estimates \fref{encadrement} we then get from \fref{inequ123}
\begin{align}\left(\| f- f_0\|_{L^1} + \|f_0\|_{L^1} -\|f\|_{L^1}\right)^2\leq & K(f_0) \iint\left(\frac{v^2}{2}+\phi_{f_0}\right)\left(f-f_0\right) d\theta dv \nonumber  \\
&\hspace{-4cm}+ \frac{1}{8\pi^2} \int_0^{+\infty}\mu_0(s)^2\beta_{f^*,f_0^*}(s) ds +m_0\int_0^{+\infty} \left(\beta_{f^*_0,f^*}(s)+ \beta_{f^*,f_0^*}(s)\right)ds \label{inequ4} 
\end{align}
Observing that 
$$\int_0^{+\infty} \beta_{f,g}(s)ds = \iint (g-f)_+d\theta dv,
$$
we get
$$ \int_0^{+\infty} \left(\beta_{f^*_0,f^*}(s)+ \beta_{f^*,f_0^*}(s)\right)ds= \|f^*-f_0^*\|_{L^1}, $$
and therefore
\begin{align}\left(\| f- f_0\|_{L^1} + \|f_0\|_{L^1} -\|f\|_{L^1}\right)^2\leq & K(f_0) \iint\left(\frac{v^2}{2}+\phi_{f_0}\right)\left(f-f_0\right) d\theta dv \nonumber  \\
&\hspace{-4cm}+  \frac{1}{8\pi^2} \int_0^{+\infty}\mu_0(s)^2\beta_{f^*,f_0^*}(s) ds+ m_0\|f^*-f_0^*\|_{L^1}. \label{inequ5} 
\end{align}
 To end the proof of inequality \fref{ineq-quant}, it only remains to show that the  quantity  $ K(f_0)$ is finite. 
First we rewrite $H_0(\mu)$ as
\begin{align*}
H_0(\mu)&=\inf_{0< s\leq \mu}   \frac{B_0(\mu+s)+B_0(\mu-s)-2B_0(\mu)}{s^2}\\
&= \inf_{0< s\leq \mu} \int_0^1(1-\lambda) ( (a_{\phi_0}^{-1})'(\mu +\lambda s) +(a_{\phi_0}^{-1})'(\mu -\lambda s))d\lambda.\\
\end{align*}
Then, from the properties  of $a_\phi$ stated in Lemma \ref{lemtechnique}, we claim that 
$$(a_{\phi_0}^{-1})'(\mu +\lambda s) +  (a_{\phi_0}^{-1})'(\mu -\lambda s)\geq (a_{\phi_0}^{-1})'(\mu),$$
for all $0\leq \lambda\leq 1$, $0< s\leq \mu$. Indeed, if $\mu \leq 16\sqrt{m_0}$ then  $\mu-\lambda s  \leq 16\sqrt{m_0}$, and since the function $a_{\phi_0}^{-1}$ is a concave function on $[0,16\sqrt{m_0}]$, we have $(a_{\phi_0}^{-1})'(\mu-\lambda s)\geq (a_{\phi_0}^{-1})'(\mu)$. Therefore we have the desired claim in this case. Similarly, if $\mu \geq 16\sqrt{m_0}$ then  $\mu+\lambda s  \geq 16\sqrt{m_0}$, and since the function $a_{\phi_0}^{-1}$ is a convex function on $[16\sqrt{m_0}, + \infty]$, we have $(a_{\phi_0}^{-1})'(\mu+\lambda s)\geq (a_{\phi_0}^{-1})'(\mu)$. Therefore the above claim holds in both cases. Using this claim, we then get
$$
H_0(\mu) \geq  \frac{1}{2a_{\phi_0}'\circ a_{\phi_0}(\mu)},$$
 and thus
$$ K(f_0)\leq 8 \int_0^{\|f_0\|_{L^\infty}} a_{\phi_0}'\circ a_{\phi_0}^{-1}(\mu_{f_0}(t))dt.$$
Now we observe that   forl all $t\geq 0$
$$\mu_{f_0}(t)= \left\thickvert\left\{ F\left(\frac{|v|^2}{2}+\phi_0(x)\right)>t\right\} \right\thickvert =  a_{\phi_0}\left(F^{-1}(t) \right),$$
and therefore
$$  K(f_0)\leq 8 \int_0^{\|f_0\|_{L^\infty}} a_{\phi_0}'\left(F^{-1}(t)\right) dt.$$
We then perform the change of variable  $e=F^{-1}(t)$ to get
\beq
\label{rhsintegral}
  K(f_0)\leq 8 \int_{-m_0}^{e_*} a_{\phi_0}'(e)| F'(e)| de.
\eeq
Now we claim that  the rhs integral in this inequality is finite.  Indeed, assume first that $e_*<+\infty$. The only possible singularities in this 
integral are at $e=m_0$ and $e=e_*$, since the function $e\mapsto a_{\phi_0}'(e)| F'(e)|$ is continuous on $[-m_0,+\infty)\backslash\{m_0,e_*\}$. 

If we suppose that $e^*\neq m_0$, then we have $a_{\phi_0}'(e)|F'(e)|\sim a_{\phi_0}'(e_*)|F'(e)|$ when $e\to e_*$ and, from Lemma \ref{lemtechnique} we have (for $m_0<e_*$ otherwise $F$ vanishes in the neighborhood of $m_0$) $a_{\phi_0}'(e)| F'(e)|\sim C\log |e-m_0|$ when $e\to m_0$.
These two possible singularities are thus integrable (the first is integrable by assumption on $F$).

If $e^*=m_0$ then our Assumption \ref{hypF} {\em (i)} ensures that $\int_{-m_0}^{e_*} a_{\phi_0}'(e)| F'(e)| de$ is finite, since $a_{\phi_0}'(e)\sim C\log |e-m_0|$ as $e\to m_0$.

Assume now that $e_*=+\infty$. Using assertion {\em (iii)} of Lemma \ref{lemtechnique}, we have $a_{\phi_0}'(e)|F'(e)|\sim C|F'(e)|/\sqrt{e} $ as $e\to +\infty,$ where $C$ is a constant.  But, as for \fref{dang},  we have

\begin{align*} 
\int\int  F\left(\frac{|v|^2}{2}+\phi_0(\theta)\right) d\theta dv &=\int\int
\left(\int_0^{F\left(\frac{|v|^2}{2}+\phi_0(\theta)\right)} dt \right) d\theta dv\\
&= \int_{0}^{+\infty} \mbox{meas}\left\{F\left(\frac{|v|^2}{2}+\phi_0(\theta)\right)>t  \right\} dt\\
&=  \int_{0}^{+\infty} a_{\phi_0}(F^{-1}(t)) dt = \int_{-m_0}^{+\infty} a_{\phi_0}(e) |F'(e)| de.
\end{align*}
This implies that the integral $\int_{-m_0}^{+\infty} a_{\phi_0}(e) |F'(e)| de$  is convergent. {}From assertion  {\em (iii)}, we know that
$a_{\phi_0}'(e) |F'(e)|\sim C_1|F'(e)|/\sqrt{e} \leq  C_1 |F'(e)|\sqrt{e} \sim C_2a_{\phi_0}(e) |F'(e)| $ for $e$ large enough, where $C_1$ and $ C_2$  are some positive constants.
This proves the fact that the rhs integral of \fref{rhsintegral} is finite, and ends the proof of  Proposition \ref{prop-ineq-quant}.

\end{proof}

\section{Proof of Theorem \ref{mainthm}}
\label{proof-thm}
 We first insert identity \fref{eq1234} into inequality \fref{ineq-quant} and get
\begin{align}
\label{estimate1} \left(\| f- f_0\|_{L^1} + \|f_0\|_{L^1} -\|f\|_{L^1}\right)^2\leq & K_0  \left[ \ham(f)-\ham(f_0)+\frac{1}{2}|M_f-M_{f_0}|^2\right] \nonumber \\ &
\hspace{-1cm} + m_0\|f^*-f_0^*\|_{L^1} + \frac{1}{8\pi^2} \int_0^{+\infty}\mu_0(s)^2\beta_{f^*,f_0^*}(s) ds.\end{align}
 We write  $M_f= |M_f| u(\theta_{f})$ where $u(\theta)= (\cos\theta ,\sin\theta)^T$, and denote by
 $f_0(\cdot -\theta_f)$ the fonction $f_0(\cdot -\theta_f)(\theta, v)= f_0(\theta-\theta_f, v)$.
  We then apply this inequality \fref{estimate1} to $f_0(\cdot -\theta_f)$ and get
  \begin{align*}
& \left(\| f- f_0(\cdot-\theta_f)\|_{L^1} + \|f_0\|_{L^1} -\|f\|_{L^1}\right)^2\leq K_0  \left[ \ham(f)-\ham(f_0)+\frac{1}{2}|M_f-M_{f_0(\cdot-\theta_f)}|^2\right] \nonumber \\ &
 \hspace*{5cm}+ m_0\|f^*-f_0^*\|_{L^1} + \frac{1}{8\pi^2} \int_0^{+\infty}\mu_0(s)^2\beta_{f^*,f_0^*}(s) ds.\end{align*}
 Now we observe that
 \begin{align*}M_{f_0(\cdot-\theta_f)} &= \int_0^{2\pi}  \rho_{f_0}(\theta -\theta_f) u(\theta) d\theta=
   \int_0^{2\pi}  \rho_{f_0}(\theta) u(\theta+\theta_f) d\theta\\
   &=(m_0 \cos(\theta_f), m_0 \sin(\theta_f))^T= m_0u(\theta_f).
   \end{align*}
   Therefore
\begin{align}
\label{estimate2}& \left(\| f- f_0(\cdot-\theta_f)\|_{L^1} + \|f_0\|_{L^1} -\|f\|_{L^1}\right)^2\leq K_0  \left[ \ham(f)-\ham(f_0)+\frac{1}{2}\left(|M_f|-m_0\right)^2\right]  \nonumber \\ &
\hspace*{5cm}+ m_0\|f^*-f_0^*\|_{L^1} + \frac{1}{8\pi^2} \int_0^{+\infty}\mu_0(s)^2\beta_{f^*,f_0^*}(s) ds.\end{align}
Now we use Corollary \ref{coroJ} together with the fact that $\calJ$ is a $C^2$ function to conclude that there exist $\delta>0$ and $C>0$  such that
$$\calJ(m)- \calJ(m_0) \geq C (m-m_0)^2 \quad \mbox{for all}\   m \in (m_0-\delta, m_0+\delta).$$
 Reporting this into estimate \fref{estimate2} yields
 \begin{align*}&\left(\| f- f_0(\cdot-\theta_f)\|_{L^1}+ \|f_0\|_{L^1} -\|f\|_{L^1}\right)^2\leq  K_0  \left[ \ham(f)-\ham(f_0)+\frac{1}{2C}\left(\calJ(|M_f|)-\calJ(m_0)\right)\right] \nonumber \\ &
\hspace{5cm}+ m_0\|f^*-f_0^*\|_{L^1} + \frac{1}{8\pi^2} \int_0^{+\infty}\mu_0(s)^2\beta_{f^*,f_0^*}(s) ds.
 \end{align*}
 for all $f$ such that $|M_f| \in (m_0-\delta, m_0+\delta)$.
 Now using inequality \fref{inegreduction}, we get 
  \begin{align}\left(\|f- f_0(\cdot-\theta_f)\|_{L^1}+ \|f_0\|_{L^1} -\|f\|_{L^1}\right)^2&\leq  C\left[\ham(f)-\ham(f_0) + C(1+\|f\|_{L^1}) \|f^*-f_0^*\|_{L^1}{\textcolor{white}\int} \right. \nonumber\\ &
  \left. \hspace{-1cm} +  C \int_0^{+\infty}s^2 (f_0^\sharp(s)- f^\sharp(s))_+ds+  C\int_0^{+\infty}\mu_0(s)^2\beta_{f^*,f_0^*}(s) ds\right] \label{nada}.
 \end{align}
 for some positive constant $C$ only depending on $f_0$. To end the proof of Theorem \ref{mainthm}, we observe that, from the inequality $(a+b)^2\geq \frac{1}{2} a^2-b^2$, 
 \begin{align*}\left(\|f- f_0(\cdot-\theta_f)\|_{L^1}+ \|f_0\|_{L^1} -\|f\|_{L^1}\right)^2&\geq
\frac{1}{2}\|f- f_0(\cdot-\theta_f)\|_{L^1}^2 - \left(\|f_0\|_{L^1} -\|f\|_{L^1}\right)^2\\
& \geq  \frac{1}{2}\|f- f_0(\cdot-\theta_f)\|_{L^1}^2 - \|f_0^* -f^*\|_{L^1}^2 \\
& \geq \frac{1}{2}\|f- f_0(\cdot-\theta_f)\|_{L^1}^2 - \left(\|f_0\|_{L^1}+\|f\|_{L^1}\right) \|f_0^* -f^*\|_{L^1}\\
& \geq \frac{1}{2}\|f- f_0(\cdot-\theta_f)\|_{L^1}^2 - \tilde C \left(1+\|f\|_{L^1}\right) \|f_0^* -f^*\|_{L^1}
 \end{align*}
 with $\tilde C= \max(1, \|f_0\|_{L^1}).$
 We then report this into \fref{nada} and get inequality \fref{stab-inequ}
 for all $f$ such that $|M_f| \in (m_0-\delta, m_0+\delta)$. 

Let us deduce \eqref{stab-inequ-compact} in the case where if $f_0$ is a compactly supported steady state. In this case, the support of $f_0^\sharp$ is $[0,\left|\mbox{Supp}f_0\right|]$, so 
 $$\int_0^{+\infty}s^2\left(f_0^\sharp(s)- f^\sharp(s)\right)_+ ds \leq\left|\mbox{Supp}f_0\right|^2\int_0^{+\infty}\left(f_0^\sharp(s)- f^\sharp(s)\right)_+ ds\leq \left|\mbox{Supp}f_0\right|^2\|f^*-f_0^*\|_{L^1}.$$
 Furthermore, for all $s\geq 0$, we have $\mu_{f_0}(s)\leq \left|\mbox{Supp}f_0\right|$, hence
 \begin{align*}
\int_0^{+\infty}\mu_{f_0}(s)^2\beta_{f^*,f_0^*}(s) ds\leq \left|\mbox{Supp}f_0\right|^2\int_0^{+\infty} \beta_{f^*,f_0^*}(s) ds &= \left|\mbox{Supp}f_0\right|^2\iint (f_0^*- f^*)_+ d\theta dv\\ &\leq\left|\mbox{Supp}f_0\right|^2 \|f^*-f_0^*\|_{L^1}
\end{align*}
This enables to deduce \eqref{stab-inequ-compact} from \fref{stab-inequ} and this ends the proof of Theorem \ref{mainthm}.
\qed

\section*{Appendix}
\begin{proof}[Proof of Lemma \ref{lemtechnique}]
The proof of Item {\em (i)} is straightforward. Let us prove Item {\em (ii)}. It is already clear from \eqref{alpha1} that $\alpha_1$ is strictly increasing. In order to prove that $\alpha_1'(e)$ is given by \eqref{aphi'} for all $e\in (-1,1)$, we perform the change of variable $u=\cos \theta$ in \eqref{alpha1}:
$$\alpha_1(e)=\int_{-1}^1g(e,u)\,du,\quad \mbox{where}\quad g(e,u)=4\sqrt{2}\frac{\sqrt{(e+u)_+}}{\sqrt{1-u^2}}.$$
For $u\in (-1,1)$, we have
$$0\leq \frac{\pa g}{\pa e}\leq q_e(u)=\frac{2\sqrt{2}}{\sqrt{1-e}}\,\frac{1}{\sqrt{(e+u)(1-u)}}{\mathds 1}_{-e\leq u\leq 1}$$
and, for all $e\in (-1,1)$,
$$\int_{-1}^1q_e(u)\,du=\frac{2\sqrt{2}}{\sqrt{1-e}}\pi.$$
Hence, by Br\'ezis-Lieb's Lemma \cite{liebloss}, we have $q_e\to q_{e_0}$ in $L^1(-1,1)$, for all $e_0\in (-1,1)$, and using a generalized dominated convergence theorem as stated in \cite{ML2} (Appendix A), we deduce that $\alpha_1$ is $\mathcal C^1$ on $(-1,1)$, with
\begin{equation}
\label{aphi'2}\alpha'_1(e)=2\sqrt{2}\int_{-e}^1\frac{du}{\sqrt{(e+u)(1-u^2)}}.
\end{equation}
Performing again the change of variable $u=\cos \theta$ in \eqref{aphi'2} yields \eqref{aphi'}. Now, we perform the change of variable $t=\frac{u+e}{1-u}$ in \eqref{aphi'2} and get, for $e\in (-1,1)$,
\begin{equation}
\label{aphi'3}
\alpha'_1(e)=2\sqrt{2}\int_{0}^{+\infty}\frac{dt}{\sqrt{t(1+t)(2t+1-e)}}.
\end{equation}
{}From this expression, we clearly see that $\alpha'_1$ is strictly increasing, which yields the convexity of $\alpha_1$ on $(-1,1)$. We also deduce that the right-derivative of $\alpha_1$ at $e=-1$ is finite and its value is given by
$$2\int_{0}^{+\infty}\frac{dt}{(1+t)\sqrt{t}}=2\pi.$$
Item {\em (iii)} is an easy consequence of the following expression, valid for $e>1$:
$$\alpha_1(e)=2\sqrt{2}\int_{0}^{2\pi}\sqrt{e+\cos \theta}\,d\theta.$$
Let us now prove Item {\em (iv)}. The value $\alpha_1(1)=16$ is obtained by a direct calculation. In order to prove the equivalent \eqref{equiv}, we first consider the case $e\to 1$, $e<1$. The change of variable $s=1/t$ in \eqref{aphi'3} yields
$$\alpha'_1(e)=2\sqrt{2}\int_{0}^{+\infty}\frac{ds}{\sqrt{s(1+s)(2+(1-e)s)}}.$$
Let
$$I_1(e)=2\sqrt{2}\int_{0}^{+\infty}\frac{ds}{(1+s)\sqrt{2+(1-e)s}}.$$
{}From
\begin{equation}
\label{major}
0\leq \frac{1}{\sqrt s}-\frac{1}{\sqrt {1+s}}=\frac{1}{\sqrt{s(1+s)}(\sqrt{s}+\sqrt{1+s})}\leq \frac{1}{(1+s)\sqrt{s}},
\end{equation}
we deduce that
$$
\left|\alpha'_1(e)-I_1(e)\right|\leq 2\sqrt{2}\int_{0}^{+\infty}\frac{ds}{(1+s)^{3/2}\sqrt{s(2+(1-e)s)}}\leq 2\int_{0}^{+\infty}\frac{ds}{(1+s)^{3/2}\sqrt{s}}=C_0.
$$
A direct computation yields
\begin{align*}
I_1(e)&=-\frac{2\sqrt{2}}{\sqrt{1+e}}\log\frac{1-e}{(\sqrt{2}+\sqrt{1+e})^2}\sim -2\log (1-e)\quad \mbox{as}\quad e\to 1,\,\,e<1,
\end{align*}
thus
$$\alpha'_1(e)\sim -2\log (1-e)\quad \mbox{as}\quad e\to 1,\,\,e<1.$$
To deal with the case $e\to 1$, $e>1$, we perform for $e>1$ the change of variable $t=\frac{1-\cos \theta}{1+\cos \theta}$ in \eqref{aphi'}:
$$\alpha'_1(e)=2\sqrt{2}\int_{0}^{+\infty}\frac{dt}{\sqrt{t(1+t)(t(e-1)+e+1)}}.$$
Using again \eqref{major}, we get
\begin{align*}
\left|\alpha'_1(e)-I_2(e)\right|&\leq 2\sqrt{2}\int_{0}^{+\infty}\frac{dt}{(1+t)^{3/2}\sqrt{t(t(e-1)+e+1)}}\leq C_0
\end{align*}
where we used that $t(e-1)+e+1\geq 2$ and where we set
$$
I_2(e)=2\sqrt{2}\int_{0}^{+\infty}\frac{dt}{(1+t)\sqrt{t(e-1)+e+1)}}.
$$
Since 
$$I_2=-2\log\frac{e-1}{(\sqrt{2}+\sqrt{1+e})^2}\sim -2\log (e-1)\quad \mbox{as}\quad e\to 1,\,\,e>1,$$
we infer that
$$\alpha'_1(e)\sim -2\log (e-1)\quad \mbox{as}\quad e\to 1,\,\,e>1,$$
which end the proof of {\em (iv)}. Finally, Item {\em (v)} is a straightforward consequence of Items {\em (i), (ii), (iii), (iv)}, and the proof of Lemma \ref{lemtechnique} is complete.
\end{proof}
\bs


\begin{thebibliography}{}
\bibitem{antoni-ruffo} M. Antoni, S. Ruffo, Clustering and relaxation in Hamiltonian long-range dynamics. Phys. Rev. E, {\bf 52} (1995), 2361.
\bibitem{antoniazzi} A. Antoniazzi, D. Fanelli, S. Ruffo, Y. Y. Yamaguchi. Nonequilibrium tricritical point in a system with long-range interactions. Phys. Rev. Lett. {\bf 99} (2007), 040601.
\bibitem{barre2} J. Barr\'e, F. Bouchet, T. Dauxois, S. Ruffo, Y. Y. Yamaguchi. The Vlasov equation and the Hamiltonian mean-field model. Physica A {\bf 365} (2006), 177.
\bibitem{barre3} J. Barr\'e, A. Olivetti, Y. Y. Yamaguchi. Dynamics of perturbations around inhomogeneous backgrounds in the HMF model. J. Stat. Mech. (2010) P08002.
\bibitem{barre4} J. Barr\'e, A. Olivetti, Y. Y. Yamaguchi. Algebraic damping in the one-dimensional Vlasov equation. J. Phys. A: Math.
Gen. {\bf 44} (2011), 405502.
\bibitem{yamaguchi1} J. Barr\'e, Y. Y. Yamaguchi. Small traveling clusters in attractive and repulsive Hamiltonian mean-field models. Phys. Rev. E {\bf 79} (2009), 036208. 
\bibitem{yamaguchi3} J. Barr\'e, Y. Y. Yamaguchi.  On the neighborhood of an inhomogeneous stable stationary solution of the Vlasov equation -- Case of an attractive cosine potential. J. Math. Phys. 56 (2015), 081502.
\bibitem{rousset1} E. Caglioti, F. Rousset. Long time estimates in the mean field limit. Arch. Ration. Mech. Anal. {\bf 190} (2008), no. 3, 517--547.
\bibitem{rousset2} E. Caglioti, F. Rousset. Quasi-stationary states for particle systems in the mean-field limit. J. Stat. Phys. {\bf 129} (2007), no. 2, 241--263.
\bibitem{chavanis3} A. Campa and P.-H. Chavanis. Inhomogeneous Tsallis distributions in the HMF model. J. Stat. Mech. (2010) P06001.
\bibitem{chavanis1} P.-H. Chavanis. Lynden-Bell and Tsallis distributions for the HMF model. Eur. Phys. J. B {\bf 53} (2006), 487.
\bibitem{chavanis4} P.-H. Chavanis, J. Vatteville,  F. Bouchet. Dynamics and thermodynamics of a simple model similar to self-gravitating systems : the HMF model. Eur. Phys. J. B, {\bf 46} (2005), 61.
\bibitem{faou-rousset} E. Faou, F. Rousset. Landau damping in Sobolev spaces for the Vlasov-HMF model, preprint.
\bibitem{ML} M. Lemou, F. M\'ehats, P. Rapha\"el. Orbital stability of spherical galactic models. Inventiones Math. {\bf 187} (2012) 145--194.
\bibitem{ML2} M. Lemou, F. M\'ehats, P. Rapha\"el. A new variational approach to the stability of gravitational systems. Comm. Math. Phys. {\bf 302} (2011), no. 1, 161--224.
\bibitem{Lq} M. Lemou, Extended rearrangement inequalities  and applications to some quantitative stability results, preprint.
\bibitem{liebloss} E. H. Lieb, M. Loss. Analysis. Second edition. Graduate Studies in Mathematics, 14. Providence, RI: Amer. Math. Soc., 2001.
\bibitem{messer-spohn} J. Messer, H. Spohn. Statistical Mechanics of the Isothermal
Lane-Emden Equation. J. Stat. Phys. {\bf 29} (1982), 561.
\bibitem{OGW} S. Ogawa. Spectral and formal stability criteria of spatially inhomogeneous solutions to the Vlasov equation for the Hamiltonian mean-field model. Phys. Rev. E {\bf 87} (2013), 062107.
\bibitem{ogawa2} S. Ogawa and Y. Y. Yamaguchi. Precise determination of the nonequilibrium tricritical point based on Lynden-Bell theory in the Hamiltonian mean-field model. Phys. Rev. E, {\bf 84} (2011), 061140.
\bibitem{rakotoson} J.-M. Rakotoson. R\'earrangement relatif, Un instrument d'estimations dans les probl\`emes aux limites. Math\'ematiques \& Applications, 64. Springer, Berlin, 2008.
\bibitem{chavanis2} F. Staniscia, P. H. Chavanis, G. De Ninno. Out-of-equilibrium phase transitions in the HMF model : a closer look. Phys. Rev. E. {\bf 83} (2011), 051111.
\bibitem{yamaguchi2} Y. Y. Yamaguchi. Construction of traveling clusters in the Hamiltonian mean-field model by nonequilibrium statistical mechanics and Bernstein-Greene-Kruskal waves. Phys. Rev. E {\bf 84} (2011), 016211.
\bibitem{barre1} Y. Y. Yamaguchi, J. Barr\'e, F. Bouchet, T. Dauxois, S. Ruffo. Stability criteria of the Vlasov equation and quasi-stationary states of the HMF model.
Physica A {\bf 337} (2004), 36.

\end{thebibliography}
\end{document}